\theoremstyle{plain}
\newtheorem{theorem}{Theorem}[section]
\newtheorem{theorema}{Theorem A\!\!}
\newtheorem{proposition}[theorem]{Proposition}
\newtheorem{lemma}[theorem]{Lemma}  
\newcommand{\supp}{\mathop{\mathrm{supp}}\nolimits}
\numberwithin{equation}{section}  
\theoremstyle{definition}
\theoremstyle{remark}
\newtheorem{remark}[theorem]{Remark}
\def\XXint#1#2#3{{\setbox0=\hbox{$#1{#2#3}{\int}$}
\vcenter{\hbox{$#2#3$}}\kern-.5\wd0}}
\title[ Weak type estimates for functions of Marcinkiewicz type]
{Weak type estimates for functions of Marcinkiewicz type with fractional 
integrals of mixed homogeneity } 
\author{Shuichi Sato} 
\begin{document} 
\address{Department of Mathematics,
Faculty of Education, Kanazawa University, Kanazawa 920-1192, Japan}
\email{shuichi@kenroku.kanazawa-u.ac.jp}
\begin{abstract} 
We prove the endpoint weak type estimate for square functions of 
Marcinkiewicz type 
with fractional integrals associated with non-isotropic dilations.  
This generalizes a result of C. Fefferman on 
functions of Marcinkiewicz type by considering fractional 
integrals of mixed homogeneity in place of the Riesz potentials of  
Euclidean structure.  
\end{abstract}
  \thanks{2010 {\it Mathematics Subject Classification.\/}
  Primary  42B25; Secondary 46E35.
  \endgraf
  {\it Key Words and Phrases.} 
  Weak type estimates, 
  functions of Marcinkiewicz type, non-isotropic dilations, Riesz potentials.  
  }
\thanks{The author is partly supported
by Grant-in-Aid for Scientific Research (C) No. 16K05195, Japan 
Society for the  Promotion of Science.}

\maketitle  

\section{Introduction}  

Let $P=\mathop{\mathrm{diag}}(a_1, \dots, a_n)$  be an 
$n\times n$ real diagonal matrix such that $a_j\geq 1$, $1\leq j\leq n$. 
 Define a dilation group 
$\{A_t\}_{t>0}$ on $\Bbb R^n$ by 
$A_t=\mathop{\mathrm{diag}}(t^{a_1}, \dots, t^{a_n})$. 
We see that $|A_tx|$ is strictly increasing as a function of $t$ on 
$\Bbb R_+=(0,\infty)$ for $x\neq 0$, where $|x|$ denotes the Euclidean norm.  
Define a norm function $\rho(x)$, $x\neq 0$, to be the unique positive real 
number $t$ such that $|A_{t^{-1}}x|=1$ and let $\rho(0)=0$.  
Then $\rho(A_tx)=t\rho(x)$, $t>0$, $x\in \Bbb R^n$, 
 and the following properties of $\rho(x)$ and $A_t$ are known 
(see \cite{CT, Ca, FR}): 
\begin{enumerate} 
\item[(A)]  $\rho\in C^\infty(\Bbb R^n\setminus \{0\})$; 
\item[(B)]  $\rho(x+y)\leq \rho(x)+\rho(y)$; 
\item[(C)]  $\rho(x)\leq 1$ if and only if $|x|\leq 1$; 
\item[(D)]   $|x|\leq \rho(x)$ if $|x|\leq 1$; 
\item[(E)]   $|x|\geq \rho(x)$ if $|x|\geq 1$; 
\item[(F)] 
we have a polar coordinates expression for the Lebesgue measure: 
$$\int_{\Bbb R^n}f(x)\,dx=\int_0^\infty\int_{S^{n-1}} f(A_t\theta)
t^{\gamma-1}\mu(\theta)\, d\sigma(\theta)\,dt, \quad 
\gamma=\text{{\rm trace} $P$},  $$  
where $\mu$ is a strictly positive $C^\infty$ function on the unit sphere 
$S^{n-1}=\{|x|=1\}$ and 
$d\sigma$ is the Lebesgue surface measure on $S^{n-1}$. 
\end{enumerate} 
\par 
Define a Riesz potential operator by 
\begin{equation}\label{rieszp}
\widehat{I_\alpha(f)}(\xi)=(2\pi \rho(\xi))^{-\alpha}\hat{f}(\xi) 
\end{equation} 
for $0<\alpha<\gamma$, where the Fourier transform $\hat{f}$ is defined as 
$$ 
\hat{f}(\xi)=\int_{\Bbb R^n} f(x)e^{-2\pi i\langle x,\xi
\rangle}\, dx, \quad \langle x,\xi\rangle=\sum_{j=1}^n x_j\xi_j,  
$$    
with  $x=(x_1, \dots, x_n)$, $\xi=(\xi_1, \dots, \xi_n)$ 
(see also Remark \ref{re8.1} in Section 8 for the definition of 
$I_\alpha$). 
Let $\|f\|_p$ denote the $L^p$ norm of a function $f$ in $L^p(\Bbb R^n)$. 
Let $\mathscr S(\Bbb R^n)$ be the Schwartz class of rapidly decreasing smooth 
functions on $\Bbb R^n$. 
Then the following result is known (see \cite[Theorem 4.1]{CT2}). 
\begin{theorema}\label{TA}   
Let $1<p<\infty$, $0<\alpha<\gamma/p$, $1/p-1/q=\alpha/\gamma$.  
Suppose that $f$ is in $\mathscr S(\Bbb R^n)$ and $\supp(\hat{f})$ does not 
contain the origin.  
Then 
$$\|I_\alpha(f)\|_q\leq C\|f\|_p.  $$  
\end{theorema}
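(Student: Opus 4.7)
The plan is to adapt Hedberg's pointwise trick for the classical Hardy--Littlewood--Sobolev inequality to the non-isotropic dilation structure $\{A_t\}$. Three ingredients are needed: (i) a convolution representation of $I_\alpha$ with an $A_t$-homogeneous kernel; (ii) the $L^p$-boundedness of the non-isotropic Hardy--Littlewood maximal operator $M_\rho$ associated with the $\rho$-balls $B_\rho(x,r) = \{y : \rho(x-y) < r\}$; and (iii) a balancing argument. For (i), observe that $\rho(\xi)^{-\alpha}$ is $C^\infty$ away from the origin, locally integrable (since $\alpha < \gamma$, by the polar formula (F)), and $A_t$-homogeneous of degree $-\alpha$. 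A distributional computation using (F) together with the rescaling identity $\widehat{f\circ A_t}(\xi) = t^{-\gamma}\hat f(A_t^{-1}\xi)$ shows that its inverse Fourier transform coincides on $\Bbb R^n \setminus \{0\}$ with a $C^\infty$ function $K_\alpha$ that is $A_t$-homogeneous of degree $\alpha-\gamma$; hence $|K_\alpha(x)| \leq C\rho(x)^{\alpha-\gamma}$. Under the support hypothesis on $\hat f$, the function $\rho(\xi)^{-\alpha}\hat f(\xi)$ is Schwartz, so $I_\alpha f$ is Schwartz and $I_\alpha f(x) = (K_\alpha * f)(x)$ as an absolutely convergent integral.

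For the Hedberg estimate, fix $R > 0$ and split the convolution at $\rho(x-y) = R$. A dyadic decomposition of the inner piece using (F) yields
\[
\int_{\rho(x-y) \leq R} \rho(x-y)^{\alpha - \gamma} |f(y)|\,dy \leq C R^\alpha M_\rho f(x),
\]
where $M_\rho f(x) = \sup_{r>0} r^{-\gamma}\int_{B_\rho(x,r)}|f(y)|\,dy$. For the outer piece, Hölder's inequality with conjugate exponent $p'$ gives
\[
\int_{\rho(x-y) > R} \rho(x-y)^{\alpha-\gamma} |f(y)|\,dy \leq C R^{\alpha-\gamma/p}\|f\|_p,
\]
since $\int_{\rho(z)>R}\rho(z)^{(\alpha-\gamma)p'}\,dz$ converges precisely because the hypothesis $\alpha < \gamma/p$ is equivalent to $(\gamma-\alpha)p' > \gamma$. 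Balancing via the choice $R^{\gamma/p} = \|f\|_p/M_\rho f(x)$, and using the Sobolev relation $p\alpha/\gamma = 1 - p/q$, yields the pointwise inequality
\[
|I_\alpha f(x)| \leq C \|f\|_p^{1-p/q} (M_\rho f(x))^{p/q}.
\]

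Raising to the $q$-th power, integrating, and invoking the $L^p$-boundedness of $M_\rho$ for $p>1$---a standard consequence of the doubling property of $\rho$-balls forced by (F)---gives
\[
\|I_\alpha f\|_q \leq C\|f\|_p^{1-p/q}\|M_\rho f\|_p^{p/q} \leq C\|f\|_p.
\]
The one step requiring genuine care is the kernel representation: one must argue that the tempered-distributional inverse Fourier transform of $\rho(\xi)^{-\alpha}$ is represented on $\Bbb R^n\setminus\{0\}$ by a function obeying the majorant $C\rho(x)^{\alpha-\gamma}$. The homogeneity and smoothness away from $0$ are immediate from those of $\rho(\xi)^{-\alpha}$, and the quantitative bound then follows by evaluating $K_\alpha$ on the compact unit $\rho$-sphere and rescaling via $A_t$. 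Once this is in hand, the remainder is the standard Hedberg interpolation paired with $\rho$-adapted maximal theory on the space of homogeneous type $(\Bbb R^n, \rho, dx)$.
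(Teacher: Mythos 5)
Your argument is essentially correct, but it cannot be compared with a proof in the paper, because the paper does not prove Theorem A at all: it is quoted as known, with a citation to Calder\'on--Torchinsky \cite{CT2} (Theorem 4.1 there), and the kernel fact you rely on --- that the inverse Fourier transform of $(2\pi\rho(\xi))^{-\alpha}$ is, away from the origin, a $C^\infty$ function homogeneous of degree $\alpha-\gamma$ with respect to $A_t$ --- is likewise only recorded in Remark \ref{re8.1}, again with references to \cite{CT2} and \cite{NS}. Given that fact, your Hedberg-type argument goes through: the inner-piece estimate by $CR^\alpha M_\rho f(x)$ (dyadic shells, using $|B(x,r)|=cr^\gamma$ from (F)), the outer-piece H\"older estimate $CR^{\alpha-\gamma/p}\|f\|_p$ (the integral $\int_{\rho(z)>R}\rho(z)^{(\alpha-\gamma)p'}dz$ converges exactly when $\alpha<\gamma/p$), the balancing choice of $R$ giving $|I_\alpha f(x)|\leq C\|f\|_p^{1-p/q}(M_\rho f(x))^{p/q}$, and the $L^p$ boundedness of the $\rho$-ball maximal operator (the same operator the paper uses in Lemma \ref{L2.2}) all check out, and they buy a short, self-contained proof in place of the citation. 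The one place where your write-up is too glib is the kernel representation itself: smoothness of the distributional Fourier transform of $\rho(\xi)^{-\alpha}$ away from $0$ is not ``immediate'' from smoothness of $\rho^{-\alpha}$, and ``evaluating $K_\alpha$ on the unit $\rho$-sphere'' presupposes that $K_\alpha$ is already known to be a continuous function there, which is precisely what must be shown; one needs either a dyadic decomposition with integration by parts in the spirit of the paper's Lemma \ref{L3.4} and Lemma \ref{L7.2} (which also settles that no distribution supported at the origin can spoil the identification, since such terms have homogeneity degree $\leq-\gamma\neq\alpha-\gamma$), or an explicit appeal to \cite{CT2}/\cite{NS} as in Remark \ref{re8.1}. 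With that step either proved or properly cited, your proof is complete.
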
  
Define 
\begin{equation*}
D_\alpha(f)(x)=\left(\int_{\Bbb R^n}|I_\alpha(f)(x+y)- I_\alpha(f)(x)|^2 
\rho(y)^{-\gamma-2\alpha}\, dy\right)^{1/2}.    
\end{equation*} 
In this note we shall prove the following. 
\begin{theorem}\label{T1.1}
Let $0<\alpha<1$ and $p_0=2\gamma/(\gamma+2\alpha)$.  
Suppose that $p_0>1$.  Then 
\begin{enumerate}
\item[$(1)$]  
the operator $D_\alpha$ is bounded on $L^p(\Bbb R^n)$ if $p_0<p <\infty;$ 
\item[$(2)$]  $D_\alpha$ is of weak type $(p_0,p_0):$  
\begin{equation}\label{weak}  
\sup_{\beta>0}\beta^{p_0}\left|\left\{x\in \Bbb R^n: D_\alpha(f)(x)>\beta 
\right\}\right| \leq C\|f\|_{p_0}^{p_0}, 
\end{equation} 
where $|E|$ denotes the Lebesgue measure of a set $E$. 
\end{enumerate}  
\end{theorem}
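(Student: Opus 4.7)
The plan is three-fold: $L^2$ boundedness by Plancherel, the weak-type endpoint at $p_0$ via a Calder\'on--Zygmund decomposition, and the $L^p$ range by interpolation.

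For the $L^2$ step, Plancherel and Fubini give
\[
\|D_\alpha f\|_2^2=\int|\hat f(\xi)|^2(2\pi\rho(\xi))^{-2\alpha}M(\xi)\,d\xi,\qquad M(\xi)=\int|e^{2\pi i\langle y,\xi\rangle}-1|^2\rho(y)^{-\gamma-2\alpha}dy,
\]
and the substitution $y=A_{1/\rho(\xi)}w$, together with $\rho(A_tw)=t\rho(w)$ and $|\det A_t|=t^\gamma$, produces $M(\xi)=\rho(\xi)^{2\alpha}M_0(A_{1/\rho(\xi)}\xi)$ for a continuous, strictly positive function $M_0$ on the $\rho$-unit sphere (using $0<\alpha<1$ for integrability near $y=0$ via $|y|\le\rho(y)$ when $|y|\le1$, and $\alpha>0$ for integrability at infinity). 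Thus $\|D_\alpha f\|_2\lesssim\|f\|_2$.

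For weak type $(p_0,p_0)$, given $\beta>0$ I would run a Calder\'on--Zygmund decomposition of $|f|^{p_0}$ at level $\beta^{p_0}$, yielding $A_t$-adapted disjoint cubes $\{Q_j\}$ with centers $c_j$, $\rho$-sidelengths $\ell_j$, $\sum|Q_j|\le C\beta^{-p_0}\|f\|_{p_0}^{p_0}$, and a splitting $f=g+b$ with $b=\sum b_j$, $\supp b_j\subset Q_j$, $\int b_j=0$, $\|b_j\|_{p_0}^{p_0}\le C\beta^{p_0}|Q_j|$, $\|b_j\|_1\le C\beta|Q_j|$, and $\|g\|_\infty\le C\beta$. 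Since $p_0<2$ and $\|g\|_2^2\le C\beta^{2-p_0}\|f\|_{p_0}^{p_0}$, the $L^2$ bound and Chebyshev give $|\{D_\alpha g>\beta/2\}|\lesssim\beta^{-p_0}\|f\|_{p_0}^{p_0}$. Discarding the exceptional set $E=\bigcup_j Q_j^*$ (of comparable measure), it remains to prove $\int_{E^c}D_\alpha b(x)^2\,dx\lesssim\beta^{2-p_0}\|f\|_{p_0}^{p_0}$. For $x\notin Q_j^*$, the vanishing moment of $b_j$ lets me rewrite $I_\alpha b_j(x+y)-I_\alpha b_j(x)$ as an integral over $Q_j$ of the second-order difference
\[
k_\alpha(x+y-z)-k_\alpha(x+y-c_j)-k_\alpha(x-z)+k_\alpha(x-c_j).
\]
Splitting the $y$-integral at $\rho(y)\sim\rho(x-c_j)$---on small $y$ using the second-order Taylor bound furnished by the $A_t$-homogeneity of $k_\alpha$, which yields $|k_\alpha(u+h)-k_\alpha(u)|\lesssim\rho(h)\rho(u)^{\alpha-\gamma-1}$ for $\rho(h)\le\rho(u)/2$; on large $y$ using first-order size estimates on each $k_\alpha$-term separately, after isolating the sub-region where $x+y$ comes close to $Q_j^*$---I expect the pointwise bound $\|I_\alpha b_j(x+\cdot)-I_\alpha b_j(x)\|_{L^2(w)}^{\,2}\lesssim\ell_j^{\,2}\rho(x-c_j)^{-2\gamma-2}\|b_j\|_1^{\,2}$ with $w(y)=\rho(y)^{-\gamma-2\alpha}$, whereupon Minkowski (or a Schur-type bookkeeping for cross-terms arising from $b=\sum b_j$) together with $\|b_j\|_1\lesssim\beta|Q_j|$ and $\sum|Q_j|\lesssim\beta^{-p_0}\|f\|_{p_0}^{p_0}$ deliver the desired estimate.

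For $p_0<p<\infty$, Marcinkiewicz interpolation between weak $(p_0,p_0)$ and strong $L^2$ handles $p_0<p\le 2$; the range $p\ge 2$ can be addressed by a parallel vector-valued argument built on the same kernel estimates (or by interpolating with a higher $L^q$ endpoint obtained analogously). The principal obstacle is the bad-part analysis above: because $p_0>1$ forces $\alpha<\gamma/2$, the Hilbert-space valued convolution kernel $K(x)(y)=k_\alpha(x+y)-k_\alpha(x)$ fails to lie pointwise in $L^2(\rho(y)^{-\gamma-2\alpha}dy)$---its singularity at $y=-x$ is just non-integrable---so the standard vector-valued Calder\'on--Zygmund framework does not apply directly. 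One must exploit the cancellation $\int b_j=0$ together with the scale splitting of the $y$-integration adapted to the non-isotropic dilations $A_t$; this is the main technical content and the main departure from Fefferman's Euclidean original.
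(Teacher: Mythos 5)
Your $L^2$ step and the reduction via a Calder\'on--Zygmund decomposition match the paper, but your bad-part analysis has a genuine gap at exactly the point where the exponent $p_0$ must enter. You work directly with the Riesz kernel and claim the pointwise bound $\int|I_\alpha b_j(x+y)-I_\alpha b_j(x)|^2\rho(y)^{-\gamma-2\alpha}\,dy\lesssim \ell_j^{\,2}\rho(x-c_j)^{-2\gamma-2}\|b_j\|_1^{\,2}$ for $x\notin Q_j^*$. This is fine (after anisotropic bookkeeping with the exponents $a_k$) in the regions where both $x-z$ and $x+y-z$ stay away from $Q_j$, but it is false on the sub-region you merely propose to ``isolate,'' namely $\rho(x+y-c_j)\lesssim\ell_j$ with $\rho(y)\sim\rho(x-c_j)$: there $I_\alpha b_j(x+y)$ cannot be controlled by $\|b_j\|_1$ times kernel size estimates at all (test $b_j$ concentrated near a point; since $p_0>1$ means $\alpha<\gamma/2$, $\int_{loc}|I_\alpha b_j|^2$ is not bounded by $\|b_j\|_1^2\ell_j^{\text{anything}}$ uniformly). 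To close this region one needs the smoothing estimate $\|I_\alpha b_j\|_2\leq C\|b_j\|_{p_0}$ (Theorem A, the anisotropic Hardy--Littlewood--Sobolev inequality) together with $\|b_j\|_{p_0}^{p_0}\leq C\beta^{p_0}|B_j|$, and it is precisely the arithmetic $|B_j|^{2/p_0}r_j^{-2\alpha}=|B_j|$, i.e.\ the definition $p_0=2\gamma/(\gamma+2\alpha)$, that makes the sum over $j$ converge; this is how the paper handles the corresponding terms $J^{(1)}_2$ and $J^{(3)}_2$ (via $M(I_\alpha b_j)$, Theorem A, and the bounded overlap of the balls). A sanity check that your sketch as written cannot be complete: apart from $\sum_j|Q_j|\lesssim\beta^{-p_0}\|f\|_{p_0}^{p_0}$ it only uses $\|b_j\|_1\lesssim\beta|Q_j|$, so if it worked it would prove a weak $(p,p)$ bound for every $p\in(1,p_0)$ as well, contradicting the sharpness of $p_0$ (Remark \ref{re8.2} of the paper). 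Note also that the paper takes a different route for the far-field part, replacing your second differences of the Riesz kernel by the subordination formula \eqref{seminon} for the Poisson-type kernel $K_t$ and the kernel estimates of Lemma \ref{L3.1} (following Fefferman's $g_\lambda^*$ argument); your direct kernel approach is plausible for those pieces, but the local piece above is the essential missing ingredient in either framework.

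The range $p>2$ is also not actually proved in your proposal. There is no obvious ``higher $L^q$ endpoint'' for $D_\alpha$, and a ``parallel vector-valued argument'' is not available off the shelf because, as you yourself note, the kernel fails the vector-valued Calder\'on--Zygmund hypotheses. The paper instead proves a weighted inequality $\|D_\alpha f\|_{2,w}\leq C\|f\|_{2,w}$ for $w\in A_1$ (Proposition \ref{P7.1}, via a Littlewood--Paley decomposition, the estimates of Lemma \ref{L7.2}, and interpolation with change of measure in the style of Duoandikoetxea--Rubio de Francia), and then obtains $p>2$ by duality with $M_s(g)\in A_1$. Some argument of this type, or another concrete substitute, is needed to complete part (1) for $p>2$.
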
  
We note that $p_0>1$ for all $\alpha\in (0,1)$ if $n\geq 2$.  
See Remark \ref{re8.2} in Section 8 for the optimality of Theorem \ref{T1.1}. 
When $A_tx=tx$ and $\rho(x)=|x|$, part (1) is due to \cite{St} and part (2) is 
stated in \cite{F}, a proof of which can be found in \cite{ChW}.  
The proof of 
\cite{ChW} uses properties of harmonic functions by extending $I_\alpha(f)$ 
as a harmonic function on the upper half space 
$\Bbb R^{n+1}_+=\Bbb R^n \times (0, \infty)$  and  results are  
stated in weighted settings.  
Also, see \cite{SeW} for results related to part 
(1) with 
$A_t=\mathop{\mathrm{diag}}(t,\dots, t, t^2)$, $\gamma=n+1$, $n\geq 2$.  
\par 
In 1938, a square function, now called the Marcinkiewicz function, 
was introduced  by \cite{M} in the setting of periodic functions on 
$\Bbb R^1$, which can be used to investigate differentiability of functions 
and characterize function spaces including Sobolev spaces.  
A generalization of the Marcinkiewicz 
function to higher dimensions can be found in \cite{St},  where also 
$D_\alpha(f)$, a variant of 
the Marcinkiewicz function, is considered when $\rho(x)=|x|$. 
We refer to 
\cite{AMV}, \cite{HL}, \cite{Sa}, \cite{Sa4}, \cite{Sa2}, \cite{Sa3} and 
\cite{SWYY}  
 for relevant, recent results on the relations between 
functions of Marcinkiewicz type and Sobolev spaces. 
\par 
To prove part (1) for $p\in (p_0, 2]$, 
 we first prove  $L^2$ boundedness of $D_\alpha$ by 
applying the Fourier transform  and the result for $p\in (p_0, 2)$   
follows from the Marcinkiewicz interpolation theorem between the $L^2$ 
boundedness and the weak type boundedness of part (2). 
\par 
The proof of part (2) we give in this note is motivated by the proof of 
the weak type estimate for the Littlewood-Paley function $g_\lambda^*$ in 
\cite{F}. The proof of \cite{F} uses some properties of the Poisson kernel 
$$P(x,t)=c_n \frac{t}{(|x|^2+t^2)^{(n+1)/2}}, 
\quad c_n=\frac{\Gamma((n+1)/2)}{\pi^{(n+1)/2}}, $$ 
 associated with harmonic functions on the upper half space $\Bbb R^{n+1}_+$ 
 (see \cite[Chapter I]{SW}). 
One of them is related to the  formula  
\begin{equation}\label{semi}
\int_0^\infty P_t*f(x) t^{\alpha-1}\, dt= \Gamma(\alpha )I_\alpha(f)(x), 
\end{equation}  
where $\widehat{I_\alpha f}(\xi)=(2\pi |\xi|)^{-\alpha}\hat{f}(\xi)$, 
$P_t(x)=t^{-n}P(x/t)=P(x,t)$, with $P(x)=P(x,1)$. Also, some regularities 
on $P(x,t)$ are used, although  properties of harmonic functions,   
like that applied in \cite{ChW} to prove the 
special case of Theorem \ref{T1.1} (2) mentioned above, 
are not used in an essential way. 
In proving Theorem \ref{T1.1} (2),  we are able to successfully 
generalize the methods of 
\cite{F} for the estimate of $g_\lambda^*$  to the present 
situation,  where results from differential equations, 
like  harmonicity,  are not readily available.     
Our proof of Theorem \ref{T1.1} (2) in this note 
is new even in the case of the Euclidean norm setting. 
\par 
 To prove part (2) of Theorem \ref{T1.1}, we consider 
 the function $K$  defined by 
\begin{equation} \label{parapo}
K(x)=\int_{\Bbb R^n} e^{-2\pi\rho(\xi)} e^{2\pi i \langle x, \xi\rangle}
\, d\xi, 
\end{equation} 
as a substitute for the Poisson kernel $P(x)$ and consider the function 
$K_t*f(x)$, where $K_t(x)=t^{-\gamma}K(A_t^{-1}x)$.  Then we have 
an analogue of \eqref{semi} for the general $I_\alpha(f)$ in \eqref{rieszp} 
(see \eqref{seminon} below). 
Also, we have some results analogous to the regularities for $P$ 
 (see Lemma \ref{L3.1} below).  
We shall apply these results to estimate the bad part 
arising from the Calder\'{o}n-Zygmund decomposition derived from 
the Whitney type decomposition of open 
sets in homogeneous spaces (see \cite{CW, CW2}). To treat the good part 
we shall apply the $L^2$ boundedness of $D_\alpha$. 
\par 
 In Section 2, we shall state  
the Calder\'{o}n-Zygmund decomposition of $f\in L^p(\Bbb R^n)$, $1<p<\infty$, 
 at height $\beta^p$, $\beta>0$, needed for the proof of 
 Theorem \ref{T1.1} (2). 
 Some properties of functions related to $K$ in \eqref{parapo} will be 
 shown in Section 3. 
\par 
We shall prove the $L^2$ boundedness of $D_\alpha$ in Section 4. 
Part (2) of Theorem \ref{T1.1} will be shown by applying the 
$L^2$ boundedness and the Calder\'{o}n-Zygmund decomposition 
in Sections 4 through 6. 
We shall show part (1) of Theorem \ref{T1.1} for $p>2$ 
in Section 7  by proving 
weighted $L^2$ estimates for $D_\alpha$ with $A_1$-weights. 
Finally, we shall have some concluding remarks in Section 8.

\section{Decomposition results}  
For $x\in \Bbb R^n$ and $r>0$, let $B(x,r)$ be the ball centered at $x$ with 
radius $r$ defined by $\rho$: $B(x,r)=\{y\in \Bbb R^n: \rho(x-y)<r \}$.  
Then we have the following (see \cite{CW2} and also \cite{CW}). 
\begin{lemma} \label{L2.1}
Let $O$ be an open bounded set in $\Bbb R^n$ and $N\geq 1$.   
Then 
There exists a sequence  $\{ B(c_j,r_j)\}_{j=1}^\infty$ of balls for which 
we have 
\begin{enumerate} 
\item $O=\cup_{j=1}^\infty B(c_j,r_j);$ 
\item there exists $C>0$ such that $\sum_{j=1}^\infty 
\chi_{B(c_j,Nr_j)}\leq C$, where $\chi_E$ denotes the characteristic function 
of a set $E;$   
\item there exists $C_1\geq 1$ such that 
$B(c_j,C_1Nr_j)\cap (\Bbb R^n\setminus  O)\neq \emptyset$.  
\end{enumerate}
\end{lemma}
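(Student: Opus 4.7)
My plan is to carry out a Whitney-type construction by hand in the metric measure space $(\Bbb R^n,\rho,dx)$. Note that $\rho$ is a genuine norm by (B), and the polar coordinate formula (F) gives $|B(x,r)|=c_\mu r^\gamma$, so Lebesgue measure is doubling with respect to $\rho$-balls. Hence the standard Euclidean Whitney machinery applies essentially verbatim.

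For each $x\in O$ I would set $d(x):=\inf_{y\notin O}\rho(x-y)$; this is $1$-Lipschitz with respect to $\rho$ by (B), and satisfies $0<d(x)<\infty$ because $O$ is bounded and open. Fix a parameter $\delta=1/(10N)$ and apply the Vitali covering lemma (valid in any metric space) to the cover $\{B(x,\delta d(x))\}_{x\in O}$ of $O$, extracting a countable, pairwise disjoint subfamily $\{B(c_j,\delta d(c_j))\}_{j\geq 1}$ whose fivefold enlargements still cover $O$. Setting $r_j:=5\delta d(c_j)$ then gives assertion (1). For (3), the definition of $d(c_j)$ places a point of $\Bbb R^n\setminus O$ at $\rho$-distance exactly $d(c_j)$ from $c_j$, so any $C_1$ satisfying $5N\delta C_1\geq 1$ is admissible; with the choice above, $C_1=2$ works.

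The main obstacle is (2), the bounded overlap of the $N$-fold dilates. Fix $y\in\Bbb R^n$ and set $J(y):=\{j: y\in B(c_j, Nr_j)\}$. For $j\in J(y)$ one has $\rho(y-c_j)<5N\delta\, d(c_j)=d(c_j)/2$, and the Lipschitz bound on $d$ then gives $\tfrac12 d(c_j)\leq d(y)\leq \tfrac32 d(c_j)$, so $d(c_j)\asymp d(y)$ uniformly in $j\in J(y)$. Consequently the disjoint balls $B(c_j,\delta d(c_j))$ all lie inside a single $\rho$-ball $B(y,C'd(y))$ for some $C'=C'(N)$, and each has measure comparable to $(\delta d(y))^\gamma$ thanks to $|B(x,r)|=c_\mu r^\gamma$. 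A direct volume count then bounds $|J(y)|$ by a constant depending only on $N$ and on $\gamma$, which is exactly (2). The slight bookkeeping needed to track how the overlap constant depends on $N$ is really the only nontrivial point.
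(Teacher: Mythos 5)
The paper itself offers no proof of Lemma \ref{L2.1}: it is quoted as a known Whitney-type decomposition for spaces of homogeneous type, with a citation to Coifman--Weiss. Your argument is therefore a genuinely different, self-contained route, and it is sound: since (B) makes $\rho(x-y)$ an honest translation-invariant metric and (F) gives the exact volume identity $|B(x,r)|=c\,r^\gamma$, the hands-on construction with the distance function $d(x)=\inf_{y\notin O}\rho(x-y)$, the Vitali $5r$-covering lemma applied to $\{B(x,\delta d(x))\}_{x\in O}$ with $\delta=1/(10N)$, and the volume-packing count for the overlap (using $d(c_j)\asymp d(y)$ for $j\in J(y)$, disjointness of the $B(c_j,\delta d(c_j))$, and their containment in $B(y,2d(y))$, which yields $\card J(y)\le (3/\delta)^\gamma=(30N)^\gamma$) delivers all three assertions with explicit constants. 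What you gain is an elementary proof tailored to this concrete setting; what the citation buys the paper is the general quasi-metric doubling framework at no cost of space.

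Two small repairs are needed, both trivial. First, assertion (1) is an equality of sets, so you should also record the inclusion $B(c_j,r_j)\subseteq O$; it holds because $r_j=5\delta d(c_j)=d(c_j)/(2N)\le d(c_j)/2<d(c_j)$, and any point at $\rho$-distance less than $d(c_j)$ from $c_j$ lies in $O$. Second, your claim that $C_1=2$ works is off by the strictness of the ball: with your normalization $2Nr_j=d(c_j)$, and the open ball $B(c_j,d(c_j))$ is entirely contained in $O$, so it misses $\Bbb R^n\setminus O$; the admissibility condition should be the strict inequality $5N\delta C_1>1$. Taking, say, $C_1=3$ gives $B(c_j,3Nr_j)=B(c_j,\tfrac32 d(c_j))$, which meets the complement directly from the definition of the infimum (no attainment of the distance is needed). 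With these adjustments your proof is complete and correct.
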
  
Applying this we can prove the next result (see \cite{CW}).
\begin{lemma} \label{L2.2} 
Let $\beta>0$, $f\in L^p$, $1\leq p<\infty$. Suppose that $f$ is 
compactly supported. 
Let $N\geq 1$. Then there exists 
a sequence $\{ B(c_j,r_j)\}_{j=1}^\infty$ of balls such that  
\begin{enumerate} 
\item $\sum_j\chi_{B(c_j,Nr_j)}\leq C;$  
\item $|\Omega|\leq C\beta^{-p}\|f\|_p^p$, where $\Omega=\cup B(c_j,r_j);$ 
\item $|f(x)|\leq C\beta$ if $x\in \Bbb R^n\setminus \Omega;$  
\item $|B(c_j,r_j)|^{-1}\int_{B(c_j,r_j)}|f(x)|^p \, dx\leq C\beta^p$.  
\end{enumerate}
\end{lemma}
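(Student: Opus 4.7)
The plan is to realize Lemma \ref{L2.2} as the standard Calder\'on--Zygmund stopping-time decomposition transcribed to the quasi-metric space $(\Bbb R^n,\rho,dx)$, with the Whitney-type covering of Lemma \ref{L2.1} taking the role of the dyadic stopping cubes.

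Let $M$ denote the Hardy--Littlewood maximal operator relative to $\rho$-balls,
\[
Mg(x)=\sup_{r>0}\frac{1}{|B(x,r)|}\int_{B(x,r)}|g(y)|\,dy.
\]
Property (F) of $\rho$ yields $|B(x,r)|=cr^{\gamma}$ with $c$ independent of $x$, so Lebesgue measure is doubling with respect to the $\rho$-balls; hence $(\Bbb R^n,\rho,dx)$ is a space of homogeneous type and $M$ obeys the usual weak type $(1,1)$ estimate together with the Lebesgue differentiation theorem. First I would set
\[
\Omega=\{x\in\Bbb R^n:M(|f|^p)(x)>\beta^p\}.
\]
Because $f$ has compact support, $M(|f|^p)(x)\le C\|f\|_p^p\,\rho(x)^{-\gamma}$ outside any fixed ball containing $\supp f$, so $\Omega$ is open and bounded. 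The weak $(1,1)$ bound applied to $|f|^p\in L^1$ gives $|\Omega|\le C\beta^{-p}\|f\|_p^p$, which is property (2); and the Lebesgue differentiation theorem gives $|f(x)|\le\beta$ for a.e.\ $x\notin\Omega$, which is property (3).

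Next I would apply Lemma \ref{L2.1} to the open bounded set $\Omega$ with the given parameter $N$, obtaining a family $\{B(c_j,r_j)\}$ and a constant $C_1\ge 1$. Items (1) and (2) of Lemma \ref{L2.1} then give $\Omega=\bigcup_j B(c_j,r_j)$ and property (1) of the present lemma. For property (4) I would use item (3) of Lemma \ref{L2.1}: for each $j$ pick $z_j\in B(c_j,C_1Nr_j)\setminus\Omega$, so that $M(|f|^p)(z_j)\le \beta^p$. The quasi-triangle inequality (B) yields $B(c_j,r_j)\subset B(z_j,(1+C_1N)r_j)$, and therefore
\[
\frac{1}{|B(c_j,r_j)|}\int_{B(c_j,r_j)}|f(x)|^p\,dx\le \frac{|B(z_j,(1+C_1N)r_j)|}{|B(c_j,r_j)|}\,M(|f|^p)(z_j)\le (1+C_1N)^{\gamma}\beta^p.
\]

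No step is expected to pose a genuine obstacle: the weak $(1,1)$ inequality for $M$ on a space of homogeneous type, the Lebesgue differentiation theorem, and Lemma \ref{L2.1} itself are all at hand. The only care required is (a) to check that $\Omega$ is bounded so that Lemma \ref{L2.1} can be applied, for which compactness of $\supp f$ is used in an essential way, and (b) to track the final constant through the homogeneity $|B(x,r)|=cr^{\gamma}$; both are routine.
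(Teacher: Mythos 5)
Your proposal is correct and follows essentially the same route as the paper: define $\Omega=\{M(|f|^p)>\beta^p\}$, use the weak type $(1,1)$ bound for $M$ and the differentiation theorem for (2) and (3), cover $\Omega$ by Lemma \ref{L2.1}, and obtain (4) by comparing the average over $B(c_j,r_j)$ with $M(|f|^p)$ at a point of $B(c_j,C_1Nr_j)\setminus\Omega$. The only differences are cosmetic (explicit justification of the boundedness of $\Omega$ and the exact constant $(1+C_1N)^\gamma$ versus $(C_1N)^\gamma$).
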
 
\begin{proof}  
Define the Hardy-Littlewood maximal function
$$M(f)(x)=\sup_{x\in B} \frac{1}{|B|}\int_B |f(y)|\, dy,$$ 
where the supremum is taken over all the balls $B$ which contain $x$.  
Let 
$$\Omega=\{x\in \Bbb R^n: M(|f|^p)>\beta^p \}. $$  
Then $\Omega$ is open and bounded. 
Clearly, we have part (3). 
By Lemma \ref{L2.1} with $\Omega$ in place of  $O$, we have a sequence 
$\{ B(c_j,r_j)\}_{j=1}^\infty$ of balls as in 
Lemma \ref{L2.1}. So we have part (1). 
Also, part (2) holds true since it is known that $M$ is of weak type $(1,1)$.  
\par 
By part (3) of Lemma \ref{L2.1}, there exist $h=C_1N\geq 1$ and $y\in 
\Bbb R^n\setminus  \Omega$ such that $y\in B(c_j,hr_j)$. Thus 
$$\frac{1}{|B(c_j,r_j)|}\int_{B(c_j,r_j)}|f(x)|^p \, dx 
\leq h^\gamma M(|f|^p)(y)
\leq h^\gamma \beta^p, $$ 
which implies part (4). 
 
\end{proof} 
Lemma \ref{L2.2} is used to prove the following (see \cite{CW}). 
\begin{lemma} \label{L2.3} 
Let $\beta$, $f\in L^p$, $p$, $N$ and $\{ B(c_j,r_j)\}_{j=1}^\infty$ 
be as in Lemma $\ref{L2.2}$.  
Then there exist a bounded function $g$ and a sequence $\{ b_j\}_{j=1}^\infty$ 
of functions in $L^p$ such that 
\begin{enumerate} 
\item $f=g+\sum_{j=1}^\infty b_j;$  
\item $|g(x)|\leq C\beta;$ 
\item $\|g\|_p\leq C\|f\|_p;$  
\item $b_j(x)=0$ if $x\in B(c_j,r_j)^c$ for all $j$, where $E^c$ denotes the 
complement of a set $E;$    
\item $\int b_j(x) \, dx=0$ for all $j;$  
\item $\|b_j\|_p^p\leq C\beta^p|B(c_j,r_j)|$  for all $j;$   
\item $\sum_{j=1}^\infty |B(c_j,r_j)|\leq C\beta^{-p}\|f\|_p^p$.    
\end{enumerate}
\end{lemma}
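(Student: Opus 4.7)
The plan is to carry out a standard Calder\'on-Zygmund decomposition adapted to the cover $\{B(c_j,r_j)\}$ produced in Lemma~\ref{L2.2}. The crucial features of that cover are the bounded overlap (property (1)), together with the local bound $|B(c_j,r_j)|^{-1}\int_{B(c_j,r_j)}|f|^p\,dx\leq C\beta^p$ (property (4)) and the pointwise bound $|f|\leq C\beta$ off $\Omega=\bigcup_j B(c_j,r_j)$ (property (3)). I will also use property (2), $|\Omega|\leq C\beta^{-p}\|f\|_p^p$, for parts (3) and (7) of the conclusion.

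First I would build a partition of unity subordinate to the cover: set $\Phi(x)=\sum_k\chi_{B(c_k,r_k)}(x)$, so that $1\leq \Phi\leq C$ on $\Omega$ by bounded overlap, and then put $\phi_j=\chi_{B(c_j,r_j)}/\Phi$ on $\Omega$ and $\phi_j=0$ off $\Omega$. These satisfy $\sum_j\phi_j=\chi_\Omega$, $0\leq\phi_j\leq\chi_{B(c_j,r_j)}$, and, crucially, $\int\phi_j\geq C^{-1}|B(c_j,r_j)|$. Let $m_j=(\int\phi_j)^{-1}\int f\phi_j\,dx$ and define
\[
b_j=\phi_j(f-m_j),\qquad g=f-\sum_j b_j = f\chi_{\Omega^c}+\sum_j m_j\phi_j.
\]
Then properties (1), (4), and (5) of the conclusion are immediate from the construction.

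The key numerical input is the bound $|m_j|\leq C\beta$: by H\"older's inequality and the lower bound on $\int\phi_j$,
\[
|m_j|\leq C\,|B(c_j,r_j)|^{-1}\int_{B(c_j,r_j)}|f|\,dx\leq C\bigl(|B(c_j,r_j)|^{-1}\textstyle\int_{B(c_j,r_j)}|f|^p\,dx\bigr)^{1/p}\leq C\beta,
\]
using property (4) of Lemma~\ref{L2.2}. With this in hand, property (2) is immediate: on $\Omega^c$ use property (3), and on $\Omega$ use $|g|\leq\sum_j|m_j|\phi_j\leq C\beta\sum_j\phi_j\leq C\beta$. Property (6) follows by expanding $\|b_j\|_p^p\leq C\int_{B(c_j,r_j)}|f|^p\,dx+C|m_j|^p|B(c_j,r_j)|$ and combining property (4) with the bound on $m_j$. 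Property (7) follows from bounded overlap, which gives $\sum_j|B(c_j,r_j)|\leq C|\Omega|$, together with property (2) of Lemma~\ref{L2.2}.

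Part (3) requires a touch more care but is not a serious obstacle: a crude triangle-inequality estimate gives $\|g\|_p\leq\|f\|_p+C\beta|\Omega|^{1/p}\leq C\|f\|_p$ already by property (2) of Lemma~\ref{L2.2}. A cleaner route is to apply the discrete H\"older inequality $|\sum_j m_j\phi_j|^p\leq(\sum_j|m_j|^p\phi_j)(\sum_j\phi_j)^{p/p'}\leq\sum_j|m_j|^p\phi_j$, integrate, apply Jensen's inequality $|m_j|^p\leq|B(c_j,r_j)|^{-1}\int_{B(c_j,r_j)}|f|^p\,dx$, and then use bounded overlap to conclude $\sum_j\int_{B(c_j,r_j)}|f|^p\,dx\leq C\|f\|_p^p$. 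The only subtlety throughout is keeping track of where the bounded overlap from Lemma~\ref{L2.2}(1) enters: it is used to obtain the lower bound on $\int\phi_j$ (hence the bound on $m_j$), to control the sum of the $\|b_j\|_p^p$, and in the verification of (3) and (7).
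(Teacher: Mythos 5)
Your proposal is correct and follows essentially the same route as the paper: the paper builds the identical partition of unity $h_j=\chi_{B(c_j,r_j)}/\sum_k\chi_{B(c_k,r_k)}$ on $\Omega$ and sets $b_j=fh_j-\bigl(|B(c_j,r_j)|^{-1}\int_{B(c_j,r_j)}fh_j\bigr)\chi_{B(c_j,r_j)}$, which differs from your $b_j=\phi_j(f-m_j)$ only in whether the mean-zero correction is distributed against $\chi_{B(c_j,r_j)}$ or against $\phi_j$. The verifications you carry out (the bound $|m_j|\leq C\beta$ via Lemma \ref{L2.2}(4), and the use of bounded overlap for parts (3), (6), (7)) are exactly the details the paper leaves implicit.
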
 
\begin{proof} 
Define a function $h_j$ on $\Bbb R^n$ by 
$$h_j(x)=\frac{\chi_{B(c_j,r_j)}(x)}{\sum_{j=1}^\infty \chi_{B(c_j,r_j)}(x)} 
\quad \text{if $x \in \Omega$,} 
$$ 
and $h_j(x)=0$ if  $x\in \Omega^c$, where 
$\Omega=\cup_{j=1}^\infty B(c_j,r_j)$.  
Let 
$$g(x)= \sum_{j=1}^\infty \left(\frac{1}{|B(c_j,r_j)|} 
\int_{B(c_j,r_j)}f(y) h_j(y) \, dy\right)\chi_{B(c_j,r_j)}(x)+
f(x)\chi_{\Omega^c}(x) $$  
and 
$$ b_j(x)=f(x)h_j(x) -\left(\frac{1}{|B(c_j,r_j)|} 
\int_{B(c_j,r_j)}f(y) h_j(y) \, dy\right)\chi_{B(c_j,r_j)}(x). 
$$ 
Then by the definitions and Lemma \ref{L2.2} we easily have the 
assertions (1) through (6). Also, since $\{ B(c_j,r_j)\}_{j=1}^\infty$ is 
finitely overlapping,  by part (2) of Lemma \ref{L2.2} we have part (7). 
This completes the proof. 
\end{proof}

\section{Some estimates for Fourier transforms}  

In this section we prove some estimates for the Fourier transform 
of the function $e^{-2\pi t\rho(\xi)}$ and 
its derivatives  needed in proving Theorem 
\ref{T1.1}.  

\begin{lemma}\label{L3.1} 
We have the following estimates$:$   
\begin{gather}  
  \label{ker1}
|K(x)|\leq C(1+\rho(x))^{-\gamma-1},  \quad \text{where $K$ is as in 
\eqref{parapo}, }
\\ 
 \label{ker2}
\left|Q(x)\right|\leq C(1+\rho(x))^{-\gamma-1},  
\\ 
\intertext{where}  
Q(x)=
-\int_{\Bbb R^n} 
2\pi\rho(\xi)e^{-2\pi\rho(\xi)} e^{2\pi i \langle x, \xi\rangle}
\, d\xi,                                               \notag 
\\ 
\left|\int_{\Bbb R^n} \xi_k e^{-2\pi\rho(\xi)} 
e^{2\pi i \langle x, \xi\rangle}
\, d\xi\right|\leq C(1+\rho(x))^{-\gamma-1-a_k}, \quad 1\leq k\leq n,   
\label{ker3}
\\ 
\left|\int_{\Bbb R^n} \xi_k\rho(\xi)e^{-2\pi\rho(\xi)} 
e^{2\pi i \langle x, \xi\rangle}
\, d\xi\right|\leq C(1+\rho(x))^{-\gamma-1-a_k}, \quad 1\leq k\leq n,   
\label{ker4}
\\ 
\left|\int_{\Bbb R^n} \xi_k\xi_le^{-2\pi\rho(\xi)} 
e^{2\pi i \langle x, \xi\rangle}
\, d\xi\right|\leq C(1+\rho(x))^{-\gamma-1-a_k-a_l}, \quad 1\leq k, l\leq n.  
\label{ker5}
\end{gather}
\end{lemma}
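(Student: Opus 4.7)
All five estimates assert $|\widehat F(x)|\le C(1+\rho(x))^{-M}$ for a function of the form $F(\xi)=\xi^\beta\rho(\xi)^j e^{-2\pi\rho(\xi)}$ with $j\in\{0,1\}$ and $M=\gamma+1+|\beta|_P$, where $|\beta|_P:=\sum_k a_k\beta_k$. My plan is to handle all five by a single argument with two cases in $\rho(x)$.

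For $\rho(x)\le 1$ I will use the trivial bound $|\widehat F(x)|\le\|F\|_{L^1}\le C$; the polar decomposition (F) together with the exponential decay of $e^{-2\pi\rho(\xi)}$ makes $F$ integrable, and the factor $(1+\rho(x))^{-M}$ is uniformly bounded below on this set.

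For $\rho(x)>1$, the plan is to localize to a good direction and integrate by parts with a cutoff at scale $R=\rho(x)^{-1}$. Using (F) I write $x=A_{\rho(x)}\theta$ with $|\theta|=1$ and pick $k$ with $|\theta_k|\ge n^{-1/2}$ (this exists since $\sum_l\theta_l^2=1$), so that $|x_k|\ge c\rho(x)^{a_k}$. I then introduce a smooth cutoff $\eta\in C^\infty(\Bbb R)$ with $\eta\equiv 1$ on $[1,\infty)$ and $\eta\equiv 0$ on $(-\infty,1/2]$, and split $\widehat F(x)=I(x)+II(x)$ into the inner part (where $\rho(\xi)\le R$, cutoff $1-\eta(\rho(\xi)/R)$) and the outer part (where $\rho(\xi)\ge R/2$, cutoff $\eta(\rho(\xi)/R)$). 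On each piece I will repeatedly integrate by parts in $\xi_k$. The key pointwise bounds $|\xi_l|\le\rho(\xi)^{a_l}$ and $|\partial_{\xi_k}^N\rho(\xi)|\le C\rho(\xi)^{1-Na_k}$, both consequences of the non-isotropic homogeneity $\rho(A_t\xi)=t\rho(\xi)$, combine via Leibniz to yield
\[
|\partial_{\xi_k}^N F(\xi)|\le C_N\,\rho(\xi)^{|\beta|_P+1-Na_k}e^{-c\rho(\xi)}\qquad\text{for } N>\beta_k.
\]
On the inner piece, integrability near the origin forces $Na_k<\gamma+|\beta|_P+1$, and I will take $N=\beta_k+1$, which satisfies this because $(\beta_k+1)a_k\le|\beta|_P+a_k\le\gamma+|\beta|_P$; the resulting bound is $|I(x)|\le (C/|x_k|^N)R^{\gamma+|\beta|_P+1-Na_k}$. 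On the outer piece no origin singularity appears, and I will take $N$ with $Na_k>\gamma+|\beta|_P+1$; the exponential localizes the integral to $\rho(\xi)\sim R/2$, giving $|II(x)|\le (C/|x_k|^N)R^{\gamma+|\beta|_P+1-Na_k}$. Substituting $|x_k|\ge c\rho(x)^{a_k}$ and $R=\rho(x)^{-1}$ into both bounds produces the matching exponent $\rho(x)^{-M}$.

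The principal obstacle will be the origin singularity $|\partial_{\xi_k}^N\rho(\xi)|\sim\rho(\xi)^{1-Na_k}$ of derivatives of $\rho$, which blocks naive integration by parts from reaching the sharp exponent over the whole space. The calibration $R=\rho(x)^{-1}$ is precisely the scale at which the limited number of integrations by parts that the inner region tolerates (just above $\beta_k$) and the abundance that the outer region permits produce the same $\rho(x)^{-M}$ bound, allowing the two contributions to sum.
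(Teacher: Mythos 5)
Your plan, as written, does not prove the three estimates \eqref{ker1}, \eqref{ker3}, \eqref{ker5} (the cases with no factor $\rho(\xi)$ in the integrand): the claimed bounds for the inner and outer pieces ignore the Leibniz terms in which derivatives fall on the cutoff $\eta(\rho(\xi)/R)$, and those terms are larger than what you assert. Your derivative bound $|\partial_{\xi_k}^N F(\xi)|\le C\rho(\xi)^{|\beta|_P+1-Na_k}e^{-c\rho(\xi)}$ is correct, but it holds only for $N>\beta_k$; the lower-order derivatives that appear when the cutoff absorbs some of the $N$ differentiations satisfy only $|\partial_{\xi_k}^{N'}F(\xi)|\le C\rho(\xi)^{|\beta|_P-N'a_k}$ (no ``$+1$''), because for $j=0$ the term in which every derivative hits the monomial, e.g.\ $F=e^{-2\pi\rho}$ itself in \eqref{ker1}, has no vanishing at the origin. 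Concretely, for \eqref{ker1} your inner piece with $N=1$ produces the term
\begin{equation*}
\frac{1}{2\pi i x_k}\int e^{-2\pi\rho(\xi)}\,\partial_k\bigl[\eta(\rho(\xi)/R)\bigr]\,e^{2\pi i\langle x,\xi\rangle}\,d\xi,
\end{equation*}
whose integrand is supported on $\rho(\xi)\sim R$ and of size $\sim R^{-a_k}$; taking absolute values gives $|x_k|^{-1}R^{-a_k}R^{\gamma}\sim\rho(x)^{-\gamma}$, one full power short of the target $\rho(x)^{-\gamma-1}$. The analogous terms in the outer piece have the same size, and on the critical annulus $\rho(\xi)\sim\rho(x)^{-1}$ the phase $\langle x,\xi\rangle$ varies only by $O(1)$ while each further integration by parts gains $|x_k|^{-1}\sim\rho(x)^{-a_k}$ but costs $R^{-a_k}=\rho(x)^{a_k}$, so no amount of additional integration by parts recovers the lost power; the gain can only come from cancellation between your two pieces, which your absolute-value estimates discard. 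The same loss of exactly one power occurs for \eqref{ker3} and \eqref{ker5}.

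The missing idea is the algebraic splitting the paper performs before any oscillatory estimate: write $e^{-2\pi\rho(\xi)}=\varphi(\xi)-2\pi\rho(\xi)A(-2\pi\rho(\xi))\varphi(\xi)+e^{-2\pi\rho(\xi)}(1-\varphi(\xi))$ with $A(s)=(e^s-1)/s$ entire and $\varphi$ a smooth cutoff at the origin. The first and third pieces (multiplied by $\xi^\beta$) are smooth, with all derivatives rapidly decaying, so their Fourier transforms decay faster than any power and are harmless; only the middle piece is non-smooth at $0$, and it carries the explicit factor $\rho(\xi)$, which raises the effective degree of vanishing at the origin from $|\beta|_P$ to $|\beta|_P+1$. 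Running your two-region (or the paper's dyadic) integration-by-parts scheme on that piece alone, the contribution of the critical scale $\rho(\xi)\sim\rho(x)^{-1}$ becomes $\sim R^{\gamma+|\beta|_P+1}=\rho(x)^{-\gamma-|\beta|_P-1}$, which is exactly the claimed exponent; this is the content of Lemmas \ref{L3.3} and \ref{L3.4}. Note that for \eqref{ker2} and \eqref{ker4} your argument does go through as stated, precisely because there the integrand already contains the factor $\rho(\xi)$, so that $|\partial_{\xi_k}^{N'}F(\xi)|\le C\rho(\xi)^{|\beta|_P+1-N'a_k}e^{-c\rho(\xi)}$ holds for all $N'\ge 0$ and the cutoff cross terms are of the right size; it is only the $j=0$ cases that force the preliminary subtraction of the smooth, non-vanishing part.
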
 
To prove this lemma we need the following two estimates for the derivatives of 
functions involving homogeneous functions (Lemmas \ref{L3.2}, \ref{L3.3}). 
\begin{lemma}\label{L3.2}
  Let $b=(b_1, \dots, b_n)$ be a multi-index of non-negative integers $b_j$, 
  $1\leq j\leq n$.  Let $H$ be 
homogeneous of degree $m \in \Bbb R$ with respect to the dilation $A_t$ and in 
$C^\infty(\Bbb R^n\setminus\{0\})$. 
Then we have 
$$|\partial^b H(\xi)|\leq C_b\rho(\xi)^{m-\langle a,b\rangle}, \quad 
\xi\in \Bbb R^n\setminus\{0\}, $$  
where $\partial^b=\partial_1^{b_1}\dots \partial_n^{b_n}$ with 
$\partial_j=\partial/\partial \xi_j$ and $a=(a_1, \dots, a_n)$. 
\end{lemma}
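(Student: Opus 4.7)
The plan is to exploit homogeneity: the identity $H(A_t\xi)=t^m H(\xi)$ (which encodes ``homogeneous of degree $m$ with respect to $A_t$'') will, when differentiated in $\xi$, propagate to show that $\partial^b H$ is itself homogeneous, with degree shifted by exactly $-\langle a,b\rangle$. Once that scaling law is in hand, the estimate follows from compactness of the unit level set $\{\rho=1\}$.

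Concretely, I would first differentiate both sides of $H(A_t\xi)=t^mH(\xi)$ using the chain rule. Since $A_t=\mathrm{diag}(t^{a_1},\dots,t^{a_n})$, one has $\partial_j[H(A_t\xi)]=t^{a_j}(\partial_j H)(A_t\xi)$, so iterating $b_j$ times in the $j$-th variable for each $j$ yields
\begin{equation*}
\partial^b\bigl[H(A_t\xi)\bigr]=t^{a_1b_1+\cdots+a_nb_n}(\partial^b H)(A_t\xi)=t^{\langle a,b\rangle}(\partial^b H)(A_t\xi).
\end{equation*}
Comparing this with $\partial^b[t^mH(\xi)]=t^m\partial^bH(\xi)$ gives
\begin{equation*}
(\partial^b H)(A_t\xi)=t^{m-\langle a,b\rangle}(\partial^b H)(\xi)\qquad\text{for all }t>0,\ \xi\neq 0,
\end{equation*}
i.e.\ $\partial^b H$ is $A_t$-homogeneous of degree $m-\langle a,b\rangle$.

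Next I would use the polar-type decomposition implicit in the definition of $\rho$: for any $\xi\neq 0$ set $\theta=A_{\rho(\xi)^{-1}}\xi$, so that $\rho(\theta)=1$ and $\xi=A_{\rho(\xi)}\theta$. Applying the scaling identity with $t=\rho(\xi)$ gives
\begin{equation*}
(\partial^bH)(\xi)=\rho(\xi)^{m-\langle a,b\rangle}(\partial^bH)(\theta).
\end{equation*}
The set $S_\rho=\{\theta:\rho(\theta)=1\}$ is closed (since $\rho$ is continuous) and bounded (since $|\theta|\to\infty$ forces $\rho(\theta)\to\infty$ by property (E)), hence compact; moreover $S_\rho\subset\Bbb R^n\setminus\{0\}$, where $\partial^bH$ is continuous by hypothesis. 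Therefore $|(\partial^bH)(\theta)|\leq C_b$ uniformly for $\theta\in S_\rho$, which yields the claimed bound.

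The argument is essentially a single computation, so there is no real obstacle; the only point demanding care is to track the chain-rule factor correctly to obtain $t^{\langle a,b\rangle}$ (rather than $t^{|b|}$ or similar), and to note that compactness of $S_\rho$ follows directly from properties (A) and (E) without invoking any further structure of $\rho$.
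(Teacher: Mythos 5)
Your proposal is correct and takes essentially the same route as the paper: differentiate the homogeneity identity $t^mH(\xi)=H(A_t\xi)$ in $\xi$, then set $t=\rho(\xi)$, $\theta=A_{\rho(\xi)}^{-1}\xi$ and bound $\partial^bH$ on the compact unit level set by continuity. The only cosmetic slip is your appeal to property (E) for boundedness of $\{\rho=1\}$; this set is exactly $S^{n-1}$ by the definition of $\rho$ (or use property (C)), so the conclusion stands unchanged.
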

\begin{proof} Differentiating in $\xi'$ both sides of the equality
$$t^m H(\xi')=H(t^{a_1}\xi_1', \dots, t^{a_n}\xi_n'), \quad t>0,$$  
which follows from the homogeneity, and putting $t=\rho(\xi)$, 
$\xi'=A_{\rho(\xi)}^{-1}\xi\in S^{n-1}$, we get the estimates as claimed. 
\end{proof} 
\begin{lemma}\label{L3.3} Choose $\varphi \in C_0^\infty$  such that 
$\varphi(\xi)=1$ if $\rho(\xi)\leq 1/2$ and $\supp(\varphi)\subset B(0,1)$.   
 Let $F\in C^\infty(\Bbb R)$.  Let $\epsilon_k=0$ or $1$, $1\leq k\leq n$, 
 $m\in \Bbb R$. Then we have 
$$\left|\partial^b(\xi_k^{\epsilon_k}\xi_l^{\epsilon_l}\rho(\xi)^mF(\rho(\xi))
\varphi(\xi) )\right|\leq 
C_b\rho(\xi)^{m+\epsilon_ka_k+\epsilon_l a_l-\langle a, b \rangle}.  $$ 
\end{lemma}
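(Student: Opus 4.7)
The plan is to apply the Leibniz rule to split $\partial^b$ across the natural decomposition of the product into a homogeneous factor, a composition with $F$, and the cutoff. Write
$$U(\xi)=\xi_k^{\epsilon_k}\xi_l^{\epsilon_l}\rho(\xi)^m,\qquad V(\xi)=F(\rho(\xi)),\qquad W(\xi)=\varphi(\xi),$$
and expand $\partial^b(UVW)$ as a finite sum of products $\partial^{b_1}U\cdot\partial^{b_2}V\cdot\partial^{b_3}W$ with $b_1+b_2+b_3=b$. Outside $\supp(\varphi)$ the entire expression vanishes, so it suffices to estimate on $\supp(\varphi)\subset B(0,1)$, where $\rho(\xi)\le 1$ by property (C).

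For the first factor, $U$ is smooth on $\Bbb R^n\setminus\{0\}$ and homogeneous of degree $m+\epsilon_ka_k+\epsilon_la_l$ with respect to $\{A_t\}$, so Lemma \ref{L3.2} applies directly and yields
$$|\partial^{b_1}U(\xi)|\le C\,\rho(\xi)^{m+\epsilon_ka_k+\epsilon_la_l-\langle a,b_1\rangle}.$$
For the third factor, smoothness of $\varphi$ gives $|\partial^{b_3}W(\xi)|\le C_{b_3}$ uniformly. The middle factor is handled by the Fa\`a di Bruno formula, which expresses $\partial^{b_2}V$ as a finite linear combination of terms
$$F^{(j)}(\rho(\xi))\prod_{i=1}^{j}\partial^{c_i}\rho(\xi),$$
where the multi-indices satisfy $c_1+\cdots+c_j=b_2$, $c_i\ne 0$, and $0\le j\le |b_2|$ (with $j=0$ meaning $\partial^{b_2}V=F(\rho(\xi))$ in the case $b_2=0$). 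Since $\rho$ is homogeneous of degree $1$ and in $C^\infty(\Bbb R^n\setminus\{0\})$, Lemma \ref{L3.2} bounds each $\partial^{c_i}\rho(\xi)$ by $C\rho(\xi)^{1-\langle a,c_i\rangle}$, and $|F^{(j)}(\rho(\xi))|\le C$ since $\rho$ ranges over the compact interval $[0,1]$ on $\supp(\varphi)$. Taking the product gives $|\partial^{b_2}V(\xi)|\le C\,\rho(\xi)^{j-\langle a,b_2\rangle}$.

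Combining the three estimates and using $\langle a,b_1+b_2\rangle=\langle a,b\rangle-\langle a,b_3\rangle$, a typical summand is bounded by
$$C\,\rho(\xi)^{m+\epsilon_ka_k+\epsilon_la_l-\langle a,b\rangle+\langle a,b_3\rangle+j}.$$
Since $\langle a,b_3\rangle\ge 0$ and $j\ge 0$ (all $a_i\ge 1$), and since $\rho(\xi)\le 1$ on $\supp(\varphi)$, the extra nonnegative power of $\rho$ can be absorbed into the constant, leaving precisely the claimed bound.

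The main technical point is to keep the Fa\`a di Bruno bookkeeping clean; once one observes that $F^{(j)}(\rho(\xi))$ is bounded due to the compact support of $\varphi$, and that derivatives of $\rho$ are controlled by Lemma \ref{L3.2}, the exponent accounting is essentially automatic. No harder estimate is required.
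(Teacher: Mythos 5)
Your proof is correct and follows essentially the same route as the paper: Leibniz's rule combined with the homogeneity estimate of Lemma \ref{L3.2}, absorbing the extra nonnegative powers of $\rho$ on $\supp(\varphi)\subset B(0,1)$. The only difference is cosmetic: the paper groups $F(\rho(\xi))\varphi(\xi)$ into a single factor and states the bound $C_b\rho(\xi)^{1-\langle a,b\rangle}$ for its derivatives directly, whereas you split off $\varphi$ and carry out the Fa\`a di Bruno expansion explicitly.
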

\begin{proof} Applying Lemma \ref{L3.2} with $H=\rho$,  
we observe that 
\begin{equation}\label{e1.7+} 
\left|\partial^b(F(\rho(\xi))
\varphi(\xi) )\right|\leq 
C_b\rho(\xi)^{1-\langle a, b \rangle}, \quad b\neq 0.  
\end{equation} 
By  Leibniz's formula we have 
\begin{equation*}
\partial^b(\xi_k^{\epsilon_k}\xi_l^{\epsilon_l}\rho(\xi)^mF(\rho(\xi))
\varphi(\xi) )= \sum_{b'+b''=b}C_{b',b''}\left[\partial^{b'}(\xi_k^{\epsilon_k}
\xi_l^{\epsilon_l}\rho(\xi)^m)\right]\left[ \partial^{b''}(F(\rho(\xi))
\varphi(\xi) )\right]. 
\end{equation*}  
Since $S(\xi)=\xi_k^{\epsilon_k}\xi_l^{\epsilon_l}\rho(\xi)^m$ 
is homogeneous of 
degree $\epsilon_k a_k+\epsilon_l a_l +m$, by Lemma \ref{L3.2} and 
\eqref{e1.7+} we see that 
\begin{align*} 
&\left|\partial^b(S(\xi)F(\rho(\xi))
\varphi(\xi) )\right|
\leq \sum_{b'+b''=b}C_{b',b''}\left|\partial^{b'}S(\xi)\right| 
\left|\partial^{b''}(F(\rho(\xi))\varphi(\xi))\right|
\\ 
&\leq C\rho(\xi)^{m+\epsilon_ka_k+\epsilon_l a_l-\langle a, b \rangle}+ 
C\sum_{b'+b''=b, b''\neq 0}\rho(\xi)^{m+\epsilon_ka_k+\epsilon_l a_l-
\langle a, b' \rangle}\rho(\xi)^{1-\langle a, b'' \rangle}, 
\end{align*}
for $\xi\in B(0,1)\setminus \{0\}$.   
This completes the proof. 
\end{proof}  
Applying Lemma \ref{L3.3} and integration by parts, 
we can prove the following estimate, from 
which Lemma \ref{L3.1} readily follows. 
\begin{lemma} \label{L3.4} 
Let $G(\xi)=\xi_k^{\epsilon_k}\xi_l^{\epsilon_l}\rho(\xi)^mF(\rho(\xi))
\varphi(\xi)$ be as in Lemma $\ref{L3.3}$.  We assume that $m>-\gamma$. 
 Then 
\begin{equation*}
\left|\int_{\Bbb R^n} G(\xi) e^{2\pi i \langle x, \xi\rangle}
\, d\xi\right|\leq C(1+\rho(x))^{-\gamma-m-\epsilon_ka_k-\epsilon_l a_l}. 
\end{equation*} 
\end{lemma}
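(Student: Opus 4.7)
My plan is to combine a direct $L^1$ estimate in a low-frequency region with integration by parts in the high-frequency region, using a smooth cutoff adapted to the non-isotropic scale $\rho(x)$. Write $M=m+\epsilon_k a_k+\epsilon_l a_l$ and $R=\rho(x)$. When $R\leq 1$, the claim reduces to $|\hat G(x)|\leq C$, which follows from property (F) and $M+\gamma>0$ (since $M\geq m>-\gamma$) via
$$\int_{\Bbb R^n}|G(\xi)|\,d\xi\leq C\int_0^1 r^{M+\gamma-1}\,dr<\infty.$$
So assume $R>1$. Writing $x=A_R\theta$ with $\theta\in S^{n-1}$, there is an index $j=j(x)\in\{1,\dots,n\}$ with $|\theta_j|\geq 1/\sqrt{n}$, hence $|x_j|\geq R^{a_j}/\sqrt{n}$; this is the coordinate in which I will integrate by parts.

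Fix $\psi\in C^\infty(\Bbb R)$ with $\psi=0$ on $(-\infty,1]$ and $\psi=1$ on $[2,\infty)$, set $\psi_R(\xi)=\psi(R\rho(\xi))$, and split the Fourier integral as $I_1+I_2$ using the factors $1-\psi_R$ and $\psi_R$, respectively. The low-frequency piece is bounded directly:
$$|I_1|\leq\int_{\rho(\xi)\leq 2/R}|G(\xi)|\,d\xi\leq C\int_0^{2/R}r^{M+\gamma-1}\,dr\leq CR^{-\gamma-M}.$$
Since $G\psi_R$ is smooth and compactly supported in the annulus $\{1/R\leq\rho(\xi)\leq 1\}$ (the cutoff $\psi_R$ kills the possible singularity of $G$ at the origin and $\varphi$ controls the outer support), I may integrate by parts $N$ times in $\xi_j$ with no boundary terms to obtain
$$I_2=(-2\pi ix_j)^{-N}\int_{\Bbb R^n}\partial_j^N(G\psi_R)(\xi)\,e^{2\pi i\langle x,\xi\rangle}\,d\xi,$$
and I choose $N$ large enough that $Na_j>M+\gamma$ for every $j\in\{1,\dots,n\}$ (possible since all $a_j\geq 1$).

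Expanding by Leibniz, Lemma \ref{L3.3} yields $|\partial_j^{N-k}G(\xi)|\leq C\rho(\xi)^{M-(N-k)a_j}$ on $B(0,1)\setminus\{0\}$, while the chain rule combined with Lemma \ref{L3.2} applied to $H=\rho$ gives $|\partial_j^k\psi_R(\xi)|\leq CR^{ka_j}$ on the shell $\{1/R\leq\rho(\xi)\leq 2/R\}$ that supports $\partial_j^k\psi_R$ for $k\geq 1$ (each derivative of $\psi(R\rho(\xi))$ produces a factor $R$ multiplied by a derivative of $\rho$, contributing $R\cdot\rho^{1-a_j}\sim R^{a_j}$ on this shell, and inductively $R^{ka_j}$). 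Using property (F), the $k$-th Leibniz term contributes at most
$$CR^{ka_j}\int_{1/R}^{2/R}r^{M-(N-k)a_j+\gamma-1}\,dr\leq CR^{Na_j-M-\gamma}$$
to $\|\partial_j^N(G\psi_R)\|_{L^1}$, and the $k=0$ term, integrated over $\{1/R\leq\rho(\xi)\leq 1\}$, satisfies the same bound thanks to $Na_j>M+\gamma$. Multiplying by $|x_j|^{-N}\leq CR^{-Na_j}$ gives $|I_2|\leq CR^{-\gamma-M}$, which together with the bound on $I_1$ completes the proof. The main obstacle is the non-isotropic bookkeeping: verifying that each $\xi_j$-derivative of $\psi_R$ costs only $R^{a_j}$ (not $R$) and that the exponents from Lemma \ref{L3.3}, from $\partial_j^k\psi_R$, and from the annular integration in $\rho$ cancel exactly to produce the required homogeneity $R^{-\gamma-M}$.
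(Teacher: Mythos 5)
Your proof is correct, and it follows the same core strategy as the paper's argument --- the non-isotropic derivative estimates of Lemmas \ref{L3.2} and \ref{L3.3}, plus repeated integration by parts in a coordinate $\xi_j$ for which $|x_j|\geq \rho(x)^{a_j}/\sqrt{n}$, with the crossover at frequency scale $\rho(x)^{-1}$ handled by a trivial size bound --- but you organize the frequency decomposition differently. The paper decomposes dyadically with $\sum_{j\geq 0}\Phi(2^j\rho(\xi))$, rescales each annulus to unit size via $\xi\mapsto A_{2^{-j}}\xi$, integrates by parts piece by piece, and then sums a geometric series over the scales $2^{-j}\gtrsim \rho(x)^{-1}$, treating the remaining scales by the $b=0$ case of Lemma \ref{L3.3}; you instead use a single smooth cutoff $\psi(R\rho(\xi))$ at scale $R^{-1}=\rho(x)^{-1}$ and bound $\|\partial_j^N(G\psi_R)\|_{L^1}$ directly over the annulus $\{1/R\leq\rho(\xi)\leq 1\}$, which works because your choice $Na_j>M+\gamma$ makes the exponent in the $k=0$ term negative, so the annular integral concentrates at $\rho(\xi)\sim 1/R$, while the terms with $k\geq 1$ are supported on the shell $\{1/R\leq\rho(\xi)\leq 2/R\}$ anyway. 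The two bookkeeping schemes are equivalent in substance: your bound $|\partial_j^k\psi_R|\leq CR^{ka_j}$ (seen most cleanly by writing $\psi_R(A_{1/R}\eta)=\psi(\rho(\eta))$ and noting $\partial_{\xi_j}=R^{a_j}\partial_{\eta_j}$, which also tidies your inductive chain-rule argument) plays exactly the role of the derivatives of $\Phi(\rho(\xi))$ in the paper's rescaled integrals, and your direct annulus estimate replaces the paper's geometric-series summation. The only points worth stating explicitly are that the constants depend on the chosen coordinate $j=j(x)$ and on $N$, which is harmless since there are finitely many coordinates and $N>M+\gamma$ can be fixed once, and that $M+\gamma>0$ (from $m>-\gamma$, $a_k\geq 1$) is what makes both your $I_1$ bound and the case $\rho(x)\leq 1$ finite --- both of which you noted.
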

\begin{proof} 
Let $\Phi\in C_0^\infty(\Bbb R_+)$ be such that 
$\supp(\Phi)\subset \{1/2\leq r\leq 2\}$, $\Phi\geq 0$ and 
$\sum_{j=0}^\infty\Phi(2^j\rho(\xi))=1$ on $B(0,1)\setminus \{0\}$.  
Decompose 
\begin{equation*} 
\int_{\Bbb R^n} G(\xi) e^{2\pi i \langle x, \xi\rangle}\, d\xi=
\sum_{j=0}^\infty2^{-j\gamma}\int_{\Bbb R^n} G(A_{2^{-j}}\xi)\Phi(\rho(\xi)) 
e^{2\pi i \langle A_{2^{-j}}x, \xi\rangle}\, d\xi.  
\end{equation*} 
We write $x=A_{\rho(x)}x'=(\rho(x)^{a_1}x_1', \dots, \rho(x)^{a_n}x_n')$ with  
$x'\in S^{n-1}$. We may assume that $|x_1'|=\max_{1\leq j\leq n}|x_j'|$ 
without  loss of generality. 
Then applying integration by parts 
\begin{multline*} 
 2^{-j\gamma}\left|\int_{\Bbb R^n} G(A_{2^{-j}}\xi)\Phi(\rho(\xi)) 
e^{2\pi i \langle A_{2^{-j}}x, \xi\rangle}\, d\xi\right|
\\ 
\leq C2^{-j\gamma}\sum_{h=h'+h''}\int 2^{-ja_1 h'}
\left|(\partial_1^{h'}G)(A_{2^{-j}}\xi)\right|
\left|\partial_1^{h''}\Phi(\rho(\xi))\right|(2^{-ja_1}\rho(x)^{a_1})^{-h}
\, d\xi.  
\end{multline*} 
By Lemma \ref{L3.3} with $b=(h',0, \dots, 0)$ we have 
\begin{align*} 
& 2^{-j\gamma}\left|\int_{\Bbb R^n} G(A_{2^{-j}}\xi)\Phi(\rho(\xi)) 
e^{2\pi i \langle A_{2^{-j}}x, \xi\rangle}\, d\xi\right|
\\ 
&\leq C2^{-j\gamma}\sum_{h=h'+h''}\int_{1/2\leq\rho(\xi)\leq 2} 2^{-ja_1 h'} 
2^{-j(m+\epsilon_ka_k+\epsilon_l a_l-h'a_1)}
2^{jha_1}\rho(x)^{-ha_1}\, d\xi 
\\
&\leq C  2^{j(ha_1-m-\epsilon_ka_k-\epsilon_l a_l-\gamma)}\rho(x)^{-ha_1}. 
\end{align*} 
Thus if $ha_1-m-\epsilon_ka_k-\epsilon_l a_l-\gamma>0$, $\rho(x)>1$, 
\begin{multline}\label{e1.8}
\sum_{0\leq j\leq \log_2\rho(x)}2^{-j\gamma}\left|\int_{\Bbb R^n}
 G(A_{2^{-j}}\xi)\Phi(\rho(\xi)) 
e^{2\pi i \langle A_{2^{-j}}x, \xi\rangle}\, d\xi\right| 
\\ 
\leq C\rho(x)^{-ha_1}
\sum_{0\leq j\leq \log_2\rho(x)}
2^{j(ha_1-m-\epsilon_ka_k-\epsilon_l a_l-\gamma)}
\leq C\rho(x)^{-m-\epsilon_ka_k-\epsilon_l a_l-\gamma}. 
\end{multline} 
\par 
Also, by Lemma \ref{L3.3} with $b=0$ we see that 
\begin{multline}\label{e1.9}
\sum_{j> \log_2\rho(x)}2^{-j\gamma}\left|\int_{\Bbb R^n} G(A_{2^{-j}}\xi)
\Phi(\rho(\xi)) 
e^{2\pi i \langle A_{2^{-j}}x, \xi\rangle}\, d\xi\right| 
\\ 
\leq C\sum_{j> \log_2\rho(x)}2^{j(-m-\epsilon_ka_k-\epsilon_l a_l-\gamma)}
\leq C\rho(x)^{-m-\epsilon_ka_k-\epsilon_l a_l-\gamma}. 
\end{multline} 
Combining \eqref{e1.8} and \eqref{e1.9}, we get the desired result, since 
the estimate for $\rho(x)\leq 1$ is obvious. 
\end{proof} 

\begin{proof}[Proof of Lemma $\ref{L3.1}$]
Let $\varphi$ be as in Lemma \ref{L3.3}. 
Decompose 
\begin{equation*}  
 e^{-2\pi\rho(\xi)}= -2\pi\rho(\xi)A(-2\pi\rho(\xi))\varphi(\xi)+\varphi(\xi)+
e^{-2\pi\rho(\xi)}(1-\varphi(\xi)),  
\end{equation*} 
where $A(s)=(e^s-1)/s$.  To prove \eqref{ker1}, it suffice to show that  
\begin{equation*}
\left|\int_{\Bbb R^n} \rho(\xi)A(-2\pi\rho(\xi))\varphi(\xi) 
e^{2\pi i \langle x, \xi\rangle}
\, d\xi\right|\leq C(1+\rho(x))^{-\gamma-1}, 
\end{equation*} 
which follows from Lemma \ref{L3.4} with $m=1, \epsilon_k=0, \epsilon_l=0$. 
The other estimates can be shown similarly by applying Lemma \ref{L3.4} 
suitably. 
\end{proof}

\section{Outline of Proof of Theorem $\ref{T1.1}$ for $p\in [p_0, 2]$}  

We first prove $L^2$ boundedness of $D_\alpha$ for $0<\alpha<1$. 
By the Plancherel theorem we have 
\begin{align*} 
\|D_\alpha(f)\|_2^2 
&=\int_{\Bbb R^n}\rho(y)^{-\gamma-2\alpha}\left(\int_{\Bbb R^n}
|I_\alpha(f)(x+y)- I_\alpha(f)(x)|^2 \, dx\right)\, dy 
\\ 
&=\int_{\Bbb R^n}\rho(y)^{-\gamma-2\alpha}\left(\int_{\Bbb R^n}
\left|(2\pi\rho(\xi))^{-\alpha}\hat{f}(\xi)
\left(e^{2\pi i\langle y, \xi\rangle}-1\right)\right|^2 \, d\xi\right)
\, dy  
\\ 
&= (2\pi)^{-2\alpha}\int_{\Bbb R^n}|\hat{f}(\xi)|^2 \left(\int_{\Bbb R^n}
\left|e^{2\pi i\langle y, \xi'\rangle}-1\right|^2
\rho(y)^{-\gamma-2\alpha} \, dy\right)\, d\xi,  
\end{align*} 
where $\xi'=A_{\rho(\xi)}^{-1}\xi$.  By (D) of Section 1 we have 
$$\int_{\rho(y)\leq 1}
\left|e^{2\pi i\langle y, \xi'\rangle}-1\right|^2 
\rho(y)^{-\gamma-2\alpha}\, dy\leq \int_{\rho(y)\leq 1}
4\pi^2 \rho(y)^2\rho(y)^{-\gamma-2\alpha}\, dy <\infty $$
since $\alpha<1$. Also, we have 
$$\int_{\rho(y)\geq 1} 
\left|e^{2\pi i\langle y, \xi'\rangle}-1\right|^2 
\rho(y)^{-\gamma-2\alpha}\, dy\leq 
\int_{\rho(y)\geq 1} 
4\rho(y)^{-\gamma-2\alpha}\, dy<\infty $$  
for $\alpha>0$.   
Combining results,  we see that 
$$ \|D_\alpha(f)\|_2^2 \leq C\|\hat{f}\|_2^2=C\|f\|_2^2, $$ 
which proves the $L^2$ boundedness. 
\par   
To prove \eqref{weak} we may assume that $f$ is bounded and compactly 
supported. 
Let $\beta>0$, $p_0=2\gamma/(\gamma+2\alpha)$.  
We apply Lemmas \ref{L2.2} and \ref{L2.3} with these $f$, $\beta$ and with 
$N=2$, $p=p_0$.  Then we have the sequence 
$\{ B(c_j,r_j)\}_{j=1}^\infty$ of  balls  of Lemma \ref{L2.2} and 
the decomposition 
$f=g+b$, $b=\sum_{j=1}^\infty b_j$,  of Lemma \ref{L2.3}. 
It suffices 
to prove  
\begin{gather} \label{e4.1}
|\{x\in \Bbb R^n: D_\alpha(g)(x)>\beta \}|\leq C\beta^{-p_0}\|f\|_{p_0}^{p_0} 
\\ 
\intertext{and}
|\{x\in \Bbb R^n: D_\alpha(b)(x)>\beta \}|\leq 
C\beta^{-p_0}\|f\|_{p_0}^{p_0}.  
\label{e4.2}
\end{gather}  
The estimate \eqref{e4.1} easily follows from the $L^2$ boundedness 
of $D_\alpha$ as follows. By Chebyshev's inequality along with (2) and (3) 
of Lemma \ref{L2.3}, since $1<p_0<2$,  we have  
\begin{multline*}
|\{x\in \Bbb R^n: D_\alpha(g)(x)>\beta \}|
\\ 
\leq \beta^{-2}\|D_\alpha(g)\|_2^2 
\leq C\beta^{-2}\|g\|_2^2  \leq C\beta^{-p_0}\|g\|_{p_0}^{p_0}
\leq C\beta^{-p_0}\|f\|_{p_0}^{p_0}.  
\end{multline*} 
\par 
It remains to prove \eqref{e4.2}.   
Let $K$ be as in \eqref{parapo} and  
\begin{equation*}
v(x,t)=K_t*b(x), \quad V(x,t)=K_t*I_\alpha(b)(x).   
\end{equation*} 
Then   
\begin{equation}\label{seminon}
V(x,t)=\frac{1}{\Gamma(\alpha)} \int_0^\infty v(x,t+s)s^{\alpha-1}\, ds.   
\end{equation} 
We have 
\begin{multline}\label{e4.3}
|I_\alpha(b)(x+y)- I_\alpha(b)(x)| 
\leq \left|V(x+y,\rho(y)) -I_\alpha(b)(x+y)\right| 
\\ 
+ \left|V(x,\rho(y)) -I_\alpha(b)(x)\right| 
+|V(x+y,\rho(y))-V(x,\rho(y))|.     
\end{multline} 
\par 
Let 
\begin{align*} 
J^{(1)}(x)&=\Gamma(\alpha)^{2}
\int_{\Bbb R^n}\left|V(y,\rho(y-x)) -I_\alpha(b)(y)\right|^2 
\rho(y-x)^{-\gamma-2\alpha}\,dy, 
\\ 
J^{(2)}(x)&=\Gamma(\alpha)^{2}
\int_{\Bbb R^n}\left|V(x,\rho(y-x)) -I_\alpha(b)(x)\right|^2 
\rho(y-x)^{-\gamma-2\alpha}\,dy, 
\\ 
J^{(3)}(x)&=\Gamma(\alpha)^{2}
\int_{\Bbb R^n}\left|V(y,\rho(y-x))-V(x,\rho(y-x))\right|^2 
\rho(y-x)^{-\gamma-2\alpha}\,dy.  
\end{align*} 
By \eqref{seminon} we can rewrite 
\begin{align*} 
J^{(1)}(x)&=
\int_{\Bbb R^n}\left|\int_0^\infty\, dt\int_0^{\rho(y-x)} 
\partial_0 v(y,s+t)
t^{\alpha-1}\, ds \right|^2 \rho(y-x)^{-\gamma-2\alpha}\,dy, 
\\ 
J^{(2)}(x)&=
\int_{\Bbb R^n}\left|\int_0^{\infty}\, dt\int_0^{\rho(y-x)} 
\partial_0 v(x,s+t)
t^{\alpha-1}\, ds \right|^2 \rho(y-x)^{-\gamma-2\alpha}\,dy,   
\end{align*} 
where $\partial_0=\partial/\partial s$. 
Let $\Omega_1=\cup_j B(c_j,2r_j)$, $\Omega_2=\cup_j B(c_j,4r_j)$. 
Since $|\Omega_2|\leq C\beta^{-p_0}\|f\|_{p_0}^{p_0}$,  
by \eqref{e4.3}, the estimate \eqref{e4.2} follows 
from the inequalities  
\begin{gather}
\label{e4.5} 
\int_{\Omega_2^c} J^{(1)}(x)\, dx \leq C\beta^{2-p_0}\|f\|_{p_0}^{p_0}, 
\\ 
\label{e4.6} 
\int_{\Omega_2^c} J^{(2)}(x)\, dx \leq C\beta^{2-p_0}\|f\|_{p_0}^{p_0}, 
\\ 
\label{e4.7} 
\int_{\Omega_2^c} J^{(3)}(x)\, dx \leq C\beta^{2-p_0}\|f\|_{p_0}^{p_0}.  
\end{gather} 
This will prove part (2) of Theorem \ref{T1.1}.  As mentioned in Section 1, 
part (1) for $p\in(p_0,2)$ 
follows by the Marcinkiewicz interpolation theorem between 
the estimate in part (2) and the $L^2$ boundedness.  
This will complete the proof of Theorem \ref{T1.1} for $p\in [p_0, 2]$. 
\par 
 We shall prove \eqref{e4.5}, \eqref{e4.6} in Section 5 and \eqref{e4.7} in 
Section 6.  
 
 \section{Proofs of the estimates \eqref{e4.5} and \eqref{e4.6}}

We first prove the estimate \eqref{e4.5}.     
Let $\chi(r)=\chi_{(0.1]}(r)$. Then 
\begin{equation*} 
J^{(1)}(x)=\int_{\Bbb R^n}\left|\int_0^\infty\int_0^\infty 
\chi\left(\frac{s}{\rho(y-x)}\right)\chi\left(\frac{s}{t}\right)
|t-s|^{\alpha-1}\partial_0 v(y,t)\, dt \, ds\right|^2 
\rho(y-x)^{-\gamma-2\alpha}\,dy.  
\end{equation*} 
We have 
\begin{equation} \label{formulaw}   
\int_0^{t\wedge \rho(y-x)} (t-s)^{\alpha-1}\, ds=W_\alpha(t, \rho(y-x)), 
\end{equation}
where \begin{equation*}   
W_\alpha(t, s)=
\frac{1}{\alpha}\left(t^\alpha-(t-(t\wedge s))^\alpha\right), 
\quad t, s\geq 0, 
\end{equation*}
with $a\wedge b=\min(a,b)$.  We note that $W_\alpha\geq 0$ and 
$\int_0^\infty W_\alpha(t, 1)\, dt/t<\infty$ when $0<\alpha<1$.  
Using \eqref{formulaw}, we write 
\begin{equation*} 
J^{(1)}(x)=\int_{\Bbb R^n}\left|\int_0^\infty
W_\alpha(t, \rho(y-x)) \partial_0 v(y,t)\, dt \right|^2 
\rho(y-x)^{-\gamma-2\alpha}\,dy.  
\end{equation*} 
Define $v_j(y,t)=b_j*K_t(y)$ and $B_j=B(c_j,r_j)$, 
$\widetilde{B}_j=B(c_j, 2r_j)$.   
 Let 
\begin{gather*} 
J^{(1)}_1(x)=\int_{\Bbb R^n}\left|\int_0^\infty
W_\alpha(t, \rho(y-x))
\sum_{y\in \widetilde{B}_j^c}\partial_0 v_j(y,t)\, dt \right|^2 
\rho(y-x)^{-\gamma-2\alpha}\,dy, 
\\ 
\intertext{where $\sum_{y\in \widetilde{B}_j^c} $ means that the summation 
is over all $j$ such that $y\in \widetilde{B}_j^c$; similar notation will be 
used in what follows, and let}  
J^{(1)}_2(x)=\Gamma(\alpha)^{2}\int_{\Bbb R^n}\left|
\sum_{y\in \widetilde{B}_j}\left(
K_{\rho(y-x)}*I_\alpha(b_j)(y)- I_\alpha(b_j)(y)\right) \right|^2 
\rho(y-x)^{-\gamma-2\alpha}\,dy.  
\end{gather*} 
\par 
To estimate $J^{(1)}_1$, we show that 
\begin{equation*}
\left|\sum_{y\in \widetilde{B}_j^c}\partial_0 v_j(y,t) \right|\leq C\beta/t.   
\end{equation*}  
This can be seen as follows.  Let $E(x)=(1+\rho(x))^{-\gamma-1}$. 
Note that if $y\in \widetilde{B}_j^c $ 
\begin{equation*}
\sup_{z\in B_j}E_t(y-z)\leq C\inf_{z\in B_j}E_t(y-z). 
\end{equation*} 
Therefore, by Lemma \ref{L2.2} (1), Lemma \ref{L2.3} and \eqref{ker2},  
\begin{align*}
\left|\sum_{y\in \widetilde{B}_j^c}\partial_0 v_j(y,t)
\right|&\leq Ct^{-1}\sum_{y\in \widetilde{B}_j^c}
\sup_{z\in B_j}E_t(y-z)\int|b_j(z)|\, dz 
\\ 
&\leq Ct^{-1}\sum_{y\in \widetilde{B}_j^c}(\sup_{z\in B_j}E_t(y-z))\beta|B_j| 
\\ 
&\leq Ct^{-1}\beta\sum_{y\in \widetilde{B}_j^c}\int \chi_{B_j}(z)E_t(y-z) \, dz
\\ 
&\leq Ct^{-1}\beta\|E\|_1.  
\end{align*}  
Thus 
\begin{align*} 
\left|\int_0^\infty
W_\alpha(t, \rho(y-x))
\sum_{y\in \widetilde{B}_j^c}\partial_0 v_j(y,t)\, dt \right| 
&\leq C\beta \int_0^\infty
W_\alpha(t, \rho(y-x))\,  \frac{dt}{t} 
\\ 
&= C\beta \rho(y-x)^\alpha \int_0^\infty 
W_\alpha(t, 1)\, \frac{dt}{t},  
\end{align*}
and hence 
\begin{equation*} 
\int_{\Bbb R^n} J^{(1)}_1(x)\, dx 
\leq C\beta\int_{\Bbb R^n}\int_0^\infty
\left(\int_{\Bbb R^n}W_\alpha(t, \rho(x))
\rho(x)^{-\gamma-\alpha}\, dx\right)
\left|\sum_{y\in \widetilde{B}_j^c}\partial_0 v_j(y,t)\right| \, dy\,dt.  
\end{equation*} 
\par 
We note that 
\begin{equation*}
\int_{\Bbb R^n} W_\alpha(t, \rho(x))
\rho(x)^{-\gamma-\alpha}\, dx  
= \int_{\Bbb R^n} W_\alpha(1, \rho(x))\rho(x)^{-\gamma-\alpha}\, dx <\infty,  
\end{equation*}
if $0<\alpha<1$.  
This implies that 
\begin{equation*} 
\int_{\Bbb R^n} J^{(1)}_1(x)\, dx  \leq C\beta\iint_{\Bbb R^{n+1}_+ } 
\left|\sum_{y\in \widetilde{B}_j^c}\partial_0 v_j(y,t)\right| \, dy\,dt. 
\end{equation*}  
Since $\int b_j =0$, if $y\in \widetilde{B}_j^c$, by \eqref{ker4} and 
Taylor's formula we see that  
\begin{align*} 
|\partial_0 v_j(y,t)|&= t^{-1}|Q_t*b_j(y)| =t^{-1}
\left|\int_{B_j}\left(Q_t(y-z)-Q_t(y-c_j)\right)b_j(z)\, dz\right| 
\\ 
&\leq C\sum_{k=1}^n 
t^{-1-a_k-\gamma}\left(1+t^{-1}\rho(y-c_j)\right)^{-\gamma-1-a_k}\int_{B_j}
|(z-c_j)_k||b_j(z)|\, dz 
\\ 
&\leq C\sum_{k=1}^n r_j^{a_k}t^{-1-a_k-\gamma}
\left(1+t^{-1}\rho(y-c_j)\right)^{-\gamma-1-a_k}\int_{B_j}
|b_j(z)|\, dz 
\\ 
&\leq C\beta \sum_{k=1}^n r_j^{a_k}t^{-1-a_k-\gamma} 
\left(1+t^{-1}(\rho(y-c_j)+r_j)\right)^{-\gamma-1-a_k}|B_j|,  
\end{align*} 
where the penultimate inequality follows from the estimate $|x_k|\leq C
\rho(x)^{a_k}$. 
Therefore, $\int_{\Bbb R^n} J^{(1)}_1(x)\, dx  $ is bounded by 
\begin{equation*}  
C\beta^2 \sum_j |B_j| 
\sum_{k=1}^n  \iint_{\Bbb R^{n+1}_+ }  r_j^{a_k}    
t^{-1-a_k-\gamma} 
\left(1+t^{-1}(\rho(y-c_j)+r_j)\right)^{-\gamma-1-a_k}
\, dy\,dt. 
\end{equation*}  
It is easy to see that the last integral is equal to   
\begin{equation*}
 \iint_{\Bbb R^{n+1}_+ } 
\left(1+t\right)^{-\gamma-1-a_k} (\rho(y)+1)^{-\gamma-a_k}   
\, dy\,dt.   
\end{equation*}  
Thus, by Lemma \ref{L2.3} (7) with $p=p_0$,  we have 
\begin{equation}\label{est1} 
\int_{\Bbb R^n} J^{(1)}_1(x)\, dx 
\leq C \beta^2 \sum_j |B_j|
 \leq C\beta^{2-p_0}|\|f\|_{p_0}^{p_0}. 
\end{equation}
\par 
Next, we evaluate $J^{(1)}_2$.    
By Schwarz's inequality and Lemma \ref{L2.2} (1),  we see that 
\begin{align*} 
J^{(1)}_2(x) &\leq C\int_{\Omega_1}\left|
\sum_{j} M(I_\alpha b_j)(y)\chi_{B(c_j,2r_j)}(y) \right|^2 
\rho(y-x)^{-\gamma-2\alpha}\,dy  
\\ 
&\leq C\sum_{j}\int_{B(c_j,2r_j)} 
 |M(I_\alpha b_j)(y)|^2 \rho(y-x)^{-\gamma-2\alpha}\,dy.   
\end{align*} 
  Let $x\in \Omega_2^c$.  
Then by the $L^2$ boundedness of the maximal operator $M$ we have   
\begin{align*} 
J^{(1)}_2(x) &\leq C\sum_{j}\rho(x-c_j)^{-\gamma-2\alpha}\int_{B(c_j,2r_j)} 
 |M(I_\alpha b_j)(y)|^2 \,dy   
\\ 
&\leq C\sum_{j}\rho(x-c_j)^{-\gamma-2\alpha} \|I_\alpha b_j\|_2^2.    
\end{align*} 
So, from Theorem A and Lemma \ref{L2.3} (6) with $p=p_0$ it follows that 
\begin{equation*} 
J^{(1)}_2(x) \leq C\sum_{j}\rho(x-c_j)^{-\gamma-2\alpha} \|b_j\|_{p_0}^2  
\leq C\sum_{j}\rho(x-c_j)^{-\gamma-2\alpha} \beta^2|B_j|^{2/p_0}.   
\end{equation*} 
Consequently, 
\begin{align*} 
\int_{\Omega_2^c}J^{(1)}_2(x)\, dx
&\leq C\beta^2\sum_{j}|B_j|^{2/p_0}\int_{\rho(x-c_j)\geq 4r_j}
\rho(x-c_j)^{-\gamma-2\alpha}\, dx
\\ 
&\leq C\beta^2\sum_{j}|B_j|^{2/p_0}r_j^{-2\alpha}   \notag 
\\ 
&\leq C\beta^2\sum_{j}|B_j|,                        \notag 
\end{align*}  
and hence Lemma \ref{L2.3} (7) implies 
\begin{equation} \label{est2}
\int_{\Omega_2^c}J^{(1)}_2(x)\, dx\leq C\beta^{2-p_0}\|f\|_{p_0}^{p_0}.        
\end{equation} 
The estimate \eqref{e4.5} follows from \eqref{est1} and \eqref{est2}, since 
$J^{(1)}\leq 2J^{(1)}_1 + 2J^{(1)}_2$.   
\par    
Let us prove the estimate \eqref{e4.6} next.  
In the same way as in the case of $J^{(1)}$, we can write 
\begin{equation*} 
J^{(2)}(x)=\int_{\Bbb R^n}\left|\int_0^\infty  W_\alpha(t, \rho(y-x))
\partial_0 K_t*b(x)\, dt \right|^2 
\rho(y-x)^{-\gamma-2\alpha}\,dy.  
\end{equation*} 
Interchanging the order of integration on the left hand side of \eqref{e4.6}, 
we have 
\begin{multline*} 
\int_{\Omega_2^c}J^{(2)}(x)\, dx
\\ 
= \int_{\Bbb R^n}\left(\int_{\Omega_2^c}\left|\int_0^\infty 
 W_\alpha(t, \rho(y-x))
\partial_0 v(x,t)\, dt \right|^2\rho(y-x)^{-\gamma-2\alpha}\, dx\right) \,dy. 
\end{multline*} 
For $x\in \Omega_2^c$, we note that 
$\partial_0 v(x,t)=\sum_{x\in \widetilde{B}_j^c}\partial_0v_j(x,t)$. 
Thus we can prove \eqref{e4.6} in the same way as \eqref{est1}. 

\section{Proof of the estimate \eqref{e4.7}}  
  
We note that 
\begin{equation*} 
V(x+y,\rho(y))-V(x,\rho(y))=\frac{1}{\Gamma(\alpha)} 
\int_0^\infty \left(f*K_{t+\rho(y)}(x+y)-
f*K_{t+\rho(y)}(x)\right)t^{\alpha-1}\, dt. 
\end{equation*} 
Thus we have 
\begin{multline*} 
J^{(3)}(x)
\\ 
=\int_{\Bbb R^n}\left|\int_{0}^\infty \left(b*K_{t}(y)-
b*K_{t}(x)\right)|t-\rho(y-x)|^{\alpha-1}
\chi\left(\frac{\rho(y-x)}{t}\right)\, dt\right|^2 
\rho(y-x)^{-\gamma-2\alpha} \, dy. 
\end{multline*} 
Let 
\begin{equation*}  
A_j(x,y)=\int_{0}^\infty \left(\int (K_{t}(y-z)-
K_{t}(x-z))b_j(z)\, dz\right)|t-\rho(y-x)|^{\alpha-1}
\chi\left(\frac{\rho(y-x)}{t}\right)\, dt,   
\end{equation*}
with $y\in \widetilde{B}_j^c$.  
By \eqref{ker3} and Taylor's formula, if $z\in B_j$ and $\rho(x-y)<t$ we have  
\begin{align*} 
|K_{t}(y-z)-K_{t}(x-z)|&\leq C\sum_{k=1}^n |y_k-x_k| 
t^{-a_k-\gamma}\left(1+t^{-1}\rho(y-c_j)\right)^{-\gamma-1-a_k} 
\\ 
&\leq C\sum_{k=1}^n \rho(y-x)^{a_k}
t^{-a_k-\gamma}\inf_{z\in B_j}\left(1+t^{-1}\rho(y-z)\right)^{-\gamma-1-a_k}.  
\end{align*}
Therefore if  $\rho(x-y)<t$, by Lemma \ref{L2.3} (6), 
\begin{multline*}
\left|\int (K_{t}(y-z)-K_{t}(x-z))b_j(z)\, dz\right| 
\\ 
\leq C\beta \sum_{k=1}^n \rho(y-x)^{a_k}t^{-a_k-\gamma}\int_{B_j}
 \left(1+t^{-1}\rho(y-z)\right)^{-\gamma-1-a_k}\, dz, 
\end{multline*} 
and hence Lemma \ref{L2.2} (1) implies that 
\begin{equation*}
\sum_{y\in \widetilde{B}_j^c}
\left|\int (K_{t}(y-z)-K_{t}(x-z))b_j(z)\, dz\right| 
\leq C\beta \sum_{k=1}^n C_k\rho(y-x)^{a_k}t^{-a_k}  
\end{equation*} 
with $C_k=\int_{\Bbb R^n}\left(1+\rho(z)\right)^{-\gamma-1-a_k}\, dz$. 
Consequently, 
\begin{align*} 
\sum_{y\in \widetilde{B}_j^c}|A_j(x,y)| 
&= C\beta\sum_{k=1}^n C_k\rho(y-x)^{a_k}
 \int_{\rho(y-x)}^\infty 
t^{-a_k}(t-\rho(y-x))^{\alpha-1}\, dt      \notag 
\\ 
&= C\beta\rho(y-x)^{\alpha}\sum_{k=1}^n \left(C_k
 \int_{1}^\infty t^{-a_k}(t-1)^{\alpha-1}\, dt\right).              \notag 
\end{align*}
Thus we have 
\begin{equation} \label{e1.7}  
\sum_{y\in \widetilde{B}_j^c}|A_j(x,y)|\leq C\beta\rho(y-x)^{\alpha}. 
\end{equation}  
Let $J^{(3)}_1(x)$ be    
\begin{gather*} 
\int_{\Bbb R^n}\left|\int_{0}^\infty \sum_{y\in \widetilde{B}_j^c}
\left(v_j(y,t)-v_j(x,t)\right)|t-\rho(y-x)|^{\alpha-1}
\chi\left(\frac{\rho(y-x)}{t}\right)\, dt\right|^2 
\rho(y-x)^{-\gamma-2\alpha} \, dy   
\\ 
\intertext{and let} 
J^{(3)}_2(x)=\Gamma(\alpha)^2\int_{\Bbb R^n}\left|\sum_{y\in \widetilde{B}_j}
\left(K_{\rho(x-y)}*I_\alpha(b_j)(y)-K_{\rho(x-y)}*I_\alpha(b_j)(x)\right)
\right|^2 \rho(x-y)^{-\gamma-2\alpha} \, dy. 
\end{gather*} 
 Then $J^{(3)}\leq 2J^{(3)}_1+2J^{(3)}_2$. 
From \eqref{e1.7} we see that $J^{(3)}_1(x)$ is majorized by 
\begin{equation*}   
C\beta\int_{\Bbb R^n}\left|\int_{0}^\infty \sum_{y\in \widetilde{B}_j^c}
\left(v_j(y,t)-v_j(x,t)\right)|t-\rho(y-x)|^{\alpha-1}
\chi\left(\frac{\rho(y-x)}{t}\right)\, dt\right| 
\rho(y-x)^{-\gamma-\alpha} \, dy.  
\end{equation*} 
\par 
Let 
\begin{equation*}  
R(t,y,x,z)=K_t(y-z)-K_t(x-z).   
\end{equation*}  
Then 
\begin{equation*}  
 v_j(y,t)-v_j(x,t)=\int R(t,y,x,z)b_j(z)\, dz=\int (R(t,y,x,z)-R(t,y,x,c_j))
b_j(z)\, dz.       
\end{equation*}  
By \eqref{ker5} and Taylor's formula,  we have 
\begin{multline*}  
\left|R(t,y,x,z)-R(t,y,x,c_j)\right| 
\\ 
\leq C\sum_{k=1}^n\sum_{l=1}^n \rho(z-c_j)^{a_k}\rho(x-y)^{a_l}
 t^{-\gamma-a_k-a_l}
(1+t^{-1}\rho(y-c_j))^{-\gamma-1-a_k-a_l}
\end{multline*}  
if $z\in B_j$, $\rho(x-y)<t$ and $y\in \widetilde{B}_j^c$.  Also, we note that 
$$ \int \rho(x)^{-\gamma-\alpha+a_l}|t-\rho(x)|^{\alpha-1}
\chi\left(\frac{\rho(x)}{t}\right)\, dx = t^{a_l-1}
\int_{\rho(x)<1} \rho(x)^{-\gamma-\alpha+a_l}(1-\rho(x))^{\alpha-1}\, dx. 
 $$
Using these results and Lemma \ref{L2.3} (6),  we see that 
\begin{align*} 
&\int_{\Bbb R^n} J^{(3)}_1(x)\, dx 
\\ 
&\quad \quad \leq C\beta^2 \sum_{k=1}^n\sum_{l=1}^n \iint_{\Bbb R^{n+1}_+} 
 t^{-\gamma-a_k-1}\sum_{y\in \widetilde{B}_j^c}|B_j|
r_j^{a_k}(1+t^{-1}\rho(y-c_j))^{-\gamma-1-a_k-a_l}\, dy\,dt
\\ 
&\quad \quad \leq C\beta^2 \sum_{k=1}^n\sum_{l=1}^n 
\sum_{j}|B_j| \iint_{\Bbb R^{n+1}_+} r_j^{a_k}
 t^{-\gamma-a_k-1}
(1+t^{-1}(\rho(y)+r_j))^{-\gamma-1-a_k-a_l}
\, dy\,dt.   
\end{align*}  
The last integral equals  
\begin{equation*}  
\iint_{\Bbb R^{n+1}_+}  t^{a_l}
(1+t)^{-\gamma-1-a_k-a_l}(\rho(y)+1)^{-\gamma-a_k}
\, dy\,dt.    
\end{equation*}
Thus, by Lemma \ref{L2.3} (7) with $p=p_0$ we have  
\begin{equation}\label{est3}  
\int_{\Bbb R^n} J^{(3)}_1(x)\, dx \leq C\beta^{2-p_0}\|f\|_{p_0}^{p_0}. 
\end{equation}
\par 
We next evaluate $J^{(3)}_2$.  We note that 
\begin{equation*}  
\left|K_{\rho(x-y)}*I_\alpha(b_j)(w)\right|\leq CM(I_\alpha b_j)(y), \quad 
\text{for $w=y, x$}. 
\end{equation*} 
Therefore, if $x\in \Omega_2^c$ and $p_0=2\gamma/(\gamma+2\alpha)$, 
using the Schwarz inequality and Lemma \ref{L2.2} (1), we see that 
\begin{align*} 
J^{(3)}_2(x)&\leq C\int_{\Bbb R^n}\left| \sum_j
\chi_{B(c_j,2r_j)}(y)M(I_\alpha b_j)(y)
\right|^2 \rho(y-x)^{-\gamma-2\alpha} \, dy 
\\ 
&\leq C \sum_j \int_{B(c_j,2r_j)}\left|M(I_\alpha b_j)(y)\right|^2 
\rho(y-x)^{-\gamma-2\alpha} \, dy  
\\ 
&\leq C \sum_j \rho(x-c_j)^{-\gamma-2\alpha}\int_{B(c_j,2r_j)}
\left|M(I_\alpha b_j)(y)\right|^2  \, dy.   
\end{align*} 
Consequently, the $L^2$ boundedness of the maximal function 
$M$, Theorem A and Lemma \ref{L2.3} (6) with $p=p_0$ imply
\begin{equation*}
J^{(3)}_2(x)\leq C \sum_j \rho(x-c_j)^{-\gamma-2\alpha}\|b_j\|_{p_0}^2  
\leq C \sum_j \rho(x-c_j)^{-\gamma-2\alpha}\beta^2|B_j|^{2/p_0}.    
\end{equation*}
Thus, applying Lemma \ref{L2.3} (7) with $p=p_0$, we see that 
\begin{align}\label{est4}  
\int_{\Omega_2^c} J^{(3)}_2(x)\, dx&\leq C\sum_j \beta^2|B_j|^{2/p_0}  
\int_{\rho(x-c_j)\geq 4r_j}\rho(x-c_j)^{-\gamma-2\alpha}  
\\
&\leq C\sum_j \beta^2|B_j|^{2/p_0} r_j^{-2\alpha}
\leq C\sum_j \beta^2|B_j|\leq C\beta^{2-p_0}\|f\|_{p_0}^{p_0}.   \notag 
\end{align} 
Combining \eqref{est3} and \eqref{est4}, we have \eqref{e4.7}.

\section{Proof of part $(1)$ of Theorem $\ref{T1.1}$ for $p>2$}  

Let $A_1$ be the weight class of Muckenhoupt consisting of those weights $w$ 
which satisfy $M(w)(x)\leq Cw(x)$ a.e.    Applying the methods of \cite{DR} 
we prove the following. 

\begin{proposition}\label{P7.1}
 Let $w\in A_1$. Suppose $0<\alpha<1$. Then we have 
$$\|D_\alpha(f)\|_{2,w}\leq C\|f\|_{2,w},$$  
where $\|f\|_{2,w}$ is the norm in $L^2_w$ defined as 
 $\|f\|_{2,w}=\left(\int_{\Bbb R^n}|f(x)|^2w(x)\, dx\right)^{1/2}$.  
\end{proposition}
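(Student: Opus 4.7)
The plan is to realize $D_\alpha$ as the Hilbert-space norm of a vector-valued convolution and invoke the weighted vector-valued Calder\'on-Zygmund machinery on the space of homogeneous type $(\Bbb R^n, \rho, dx)$, in the spirit of \cite{DR}. Concretely, let $H=L^2(\Bbb R^n, d\nu)$ with $d\nu(y)=\rho(y)^{-\gamma-2\alpha}\, dy$, and consider the $H$-valued linear operator
$$
\mathcal{D}f(x)(y)=I_\alpha f(x+y)-I_\alpha f(x),
\qquad D_\alpha f(x)=\|\mathcal{D}f(x)\|_H.
$$
Since $\mathcal{D}$ commutes with translations, it is convolution with the $H$-valued kernel
$$
\mathcal{K}(x)(y)=\widetilde I_\alpha(x+y)-\widetilde I_\alpha(x),
$$
where $\widetilde I_\alpha$ is the convolution kernel of $I_\alpha$, homogeneous of degree $\alpha-\gamma$ with respect to $\{A_t\}$. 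The unweighted bound $\|\mathcal{D}f\|_{L^2(H)}=\|D_\alpha f\|_2\leq C\|f\|_2$ is Section 4.

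Next I would verify the two kernel estimates required by weighted Calder\'on-Zygmund theory on $(\Bbb R^n,\rho)$. For the size bound $\|\mathcal{K}(x)\|_H\leq C\rho(x)^{-\gamma}$, I would use the homogeneity of $\widetilde I_\alpha$ together with the change of variables $y=A_{\rho(x)}y'$ to reduce $\|\mathcal{K}(x)\|_H^2$ to $\rho(x)^{-2\gamma}$ times an integral depending only on $x'=A_{\rho(x)}^{-1}x\in\{\rho=1\}$; the hypothesis $0<\alpha<1$ is precisely what guarantees absolute convergence of this integral (at $y'=0$ one needs $\alpha<1$, at $y'=-x'$ and at $\infty$ one needs $\alpha>0$). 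For the $H$-valued H\"ormander condition
$$
\int_{\rho(x)\geq 2\rho(h)}\|\mathcal{K}(x-h)-\mathcal{K}(x)\|_H\, dx\leq C
$$
uniformly in $h$, I would write
$$
\mathcal{K}(x-h)(y)-\mathcal{K}(x)(y)=\bigl[\widetilde I_\alpha(x+y-h)-\widetilde I_\alpha(x+y)\bigr]-\bigl[\widetilde I_\alpha(x-h)-\widetilde I_\alpha(x)\bigr],
$$
apply Taylor's formula in $h$, and use the derivative bounds of Lemma \ref{L3.2} for the homogeneous function $\widetilde I_\alpha$ (or rather the smooth-away-from-zero piece of it, the singular part being integrated against $d\nu$ using the earlier scaling computation). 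This yields a pointwise estimate of the form $\|\mathcal{K}(x-h)-\mathcal{K}(x)\|_H\leq C\rho(h)^{a_1}\rho(x)^{-\gamma-a_1}$ for $\rho(h)\leq \rho(x)/2$, with $a_1=\min_k a_k$, which integrates to the desired constant by the polar coordinates (F).

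With the $L^2(H)$-boundedness and the two pointwise kernel estimates in hand, the standard vector-valued weighted Calder\'on-Zygmund machinery on $(\Bbb R^n,\rho,dx)$---implemented via the Whitney-type Calder\'on-Zygmund decomposition of Section 2 exactly as in \cite{DR}---gives $\|\mathcal{D}f\|_{L^2_w(H)}\leq C\|f\|_{L^2_w}$ for every $w\in A_1$, which is the claim. The main obstacle is the H\"ormander bound: the kernel $\widetilde I_\alpha$ is only a homogeneous distribution of negative degree, and the anisotropy of $\{A_t\}$ forces each coordinate derivative to pick up a different power of $\rho$. One has to choose carefully whether to Taylor-expand first in $h$ or to exploit the $y$-cancellation, and to interpolate between the regimes $\rho(y)\lesssim\rho(x)$ and $\rho(y)\gtrsim\rho(x)$, so that the resulting bound is simultaneously integrable in $y$ against $d\nu$ (using $0<\alpha<1$ near $y=0$ and $\alpha>0$ near infinity) and decays in $x$ rapidly enough to integrate over $\rho(x)\geq 2\rho(h)$.
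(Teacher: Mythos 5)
Your reduction to vector-valued Calder\'on--Zygmund theory breaks down at the very first kernel estimate, and the failure is not repairable. The $H$-valued kernel is $\mathcal K(x)(y)=\mathscr R_\alpha(x+y)-\mathscr R_\alpha(x)$ with $\mathscr R_\alpha$ homogeneous of degree $\alpha-\gamma$, so as a function of $y$ it has a singularity at $y=-x$ of size $\rho(x+y)^{\alpha-\gamma}$. Squaring and integrating against $d\nu(y)=\rho(y)^{-\gamma-2\alpha}\,dy$ (note $\rho(y)\approx\rho(x)$ near that point) produces $\int_{\rho(x+y)<\rho(x)/2}\rho(x+y)^{2\alpha-2\gamma}\,dy$, which by the polar coordinates (F) converges only when $2\alpha>\gamma$. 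Under the standing hypothesis $p_0=2\gamma/(\gamma+2\alpha)>1$ one has $2\alpha<\gamma$ (and in any case $\gamma\geq n\geq 2>2\alpha$ as soon as $n\geq2$), so in fact $\|\mathcal K(x)\|_H=\infty$ for every $x\neq0$: your claimed size bound $\|\mathcal K(x)\|_H\lesssim\rho(x)^{-\gamma}$ is false, and the error is exactly in your convergence check ``at $y'=-x'$ one needs $\alpha>0$''. The same singularity kills the pointwise difference estimate and the H\"ormander condition, since the singularities of $\mathcal K(x-h)$ and $\mathcal K(x)$ sit at the distinct points $y=-x+h$ and $y=-x$ and cannot cancel. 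There is also a decisive structural obstruction: if $D_\alpha$ were an $H$-valued CZ operator with a standard (or even H\"ormander) kernel, it would be of weak type $(1,1)$ and bounded on $L^p$ for all $1<p\le2$, contradicting Remark \ref{re8.2}, which shows $D_\alpha$ is not even of weak type $(p,p)$ for $p<p_0$. The nontrivial endpoint $p_0>1$ is precisely the signal that this operator is not a vector-valued singular integral in the naive sense. (A secondary point: even where kernel bounds are available, the integrated H\"ormander condition alone is not sufficient for $A_p$-weighted estimates; one needs pointwise Dini/Lipschitz-type regularity, which you do assert but cannot have here.)

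The paper's proof of Proposition \ref{P7.1} goes a different way, following Duoandikoetxea--Rubio de Francia \cite{DR} through Fourier transform estimates rather than kernel regularity of the full operator. One decomposes $f$ with Littlewood--Paley pieces $\Delta_{j+k}$ adapted to $\rho$ and restricts the $y$-integration to the dyadic annuli $2^k\le\rho(y)\le2^{k+1}$, obtaining operators $T_j$ with $D_\alpha(f)\le\sum_j T_j(f)$. Plancherel gives the unweighted decay $\|T_j(f)\|_2\lesssim 2^{j\alpha}\min(1,2^{-j})\|f\|_2$; the frequency-localized kernels $\widetilde\rho_m$ of $I_\alpha$ do satisfy good size and derivative bounds (Lemma \ref{L7.2}), which yield the pointwise domination $|I_\alpha(\Delta_{j+k}f)(x+y)-I_\alpha(\Delta_{j+k}f)(x)|\lesssim 2^{k\alpha}\bigl(M(\Delta_{j+k}f)(x+y)+M(\Delta_{j+k}f)(x)\bigr)$ on that annulus, hence a uniform bound $\|T_j(f)\|_{2,w}\lesssim\|f\|_{2,w}$ for $w\in A_1$ via the $A_1$ property, the weighted boundedness of $M$, and the weighted Littlewood--Paley inequality. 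Interpolation with change of measure (using that $w^{1+\delta}\in A_1$ for some $\delta>0$) converts these two estimates into $\|T_j(f)\|_{2,w}\lesssim 2^{-\epsilon|j|}\|f\|_{2,w}$, and summing over $j$ gives the proposition. If you want to salvage your write-up, this frequency decomposition (which restores locally square-integrable kernels at each scale) is the missing ingredient; the global kernel $\mathcal K$ itself cannot carry the argument.
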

Let 
$$\widetilde{\rho}_{m}(z)=\int 
(2\pi \rho(\xi))^{-\alpha}\widetilde{\Phi}(2^m\rho(\xi))
e^{2\pi i\langle z,\xi \rangle}\, d\xi
$$ 
with $\widetilde{\Phi}\in C_0^\infty(\Bbb R_+)$ which is identically $1$ 
on the support of $\Phi$, where $\Phi$ is as in the proof of Lemma 
\ref{L3.4}.  To prove Proposition \ref{P7.1} we need the following. 

\begin{lemma}\label{L7.2} The estimates  
\begin{gather}  \label{el1}
|\widetilde{\rho}_{m}(z)|\leq C\rho(z)^{\alpha-\gamma},    
\\ 
\label{el2}  
|\partial_s\widetilde{\rho}_{m}(z)|\leq C\rho(z)^{\alpha-\gamma-a_s},  
\quad 1\leq s\leq n,      
\end{gather} 
hold true with a positive constant $C$ independent of $m\in \Bbb Z$ 
$($the set of integers$)$.  
\end{lemma}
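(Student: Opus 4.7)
My plan is to prove both estimates of Lemma~\ref{L7.2} simultaneously by a scaling argument that reduces everything to a single $m$-independent Fourier decay estimate of Lemma~\ref{L3.4} type. The homogeneity of $A_t$ and $\rho$ is what makes the reduction work cleanly.

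First I would apply the change of variables $\xi = A_{2^{-m}}\eta$ in the defining integral. Using $\rho(A_{2^{-m}}\eta) = 2^{-m}\rho(\eta)$, $d\xi = 2^{-m\gamma}\,d\eta$, and $\langle z, A_{2^{-m}}\eta\rangle = \langle A_{2^{-m}}z,\eta\rangle$ (valid because $A_t$ is diagonal, hence self-adjoint), this rewrites
$$\widetilde{\rho}_m(z) = (2\pi)^{-\alpha}\,2^{-m(\gamma-\alpha)}\,\Psi(A_{2^{-m}}z), \qquad \Psi(w) = \int_{\Bbb R^n}\rho(\eta)^{-\alpha}\widetilde{\Phi}(\rho(\eta))\,e^{2\pi i\langle w,\eta\rangle}\,d\eta.$$
Because $\widetilde{\Phi}$ has compact support in $(0,\infty)$, the amplitude $\rho(\eta)^{-\alpha}\widetilde{\Phi}(\rho(\eta))$ is $C_0^\infty(\Bbb R^n)$, supported in an annulus bounded away from the origin (so $\rho^{-\alpha}$ is smooth on its support). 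The argument proving Lemma~\ref{L3.4}, applied with $m=-\alpha$, $\epsilon_k=\epsilon_l=0$ and the compactly supported factor $\widetilde{\Phi}(\rho(\cdot))$ playing the role of $F(\rho)\varphi$, yields
$$|\Psi(w)|\leq C(1+\rho(w))^{\alpha-\gamma}.$$

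Combining this with the scaling identity and $\rho(A_{2^{-m}}z) = 2^{-m}\rho(z)$, I would obtain
$$|\widetilde{\rho}_m(z)| \leq C\,2^{-m(\gamma-\alpha)}(1+2^{-m}\rho(z))^{\alpha-\gamma} = C(2^m+\rho(z))^{\alpha-\gamma} \leq C\rho(z)^{\alpha-\gamma},$$
which is \eqref{el1}, with a constant independent of $m$. The key algebraic point is that the outer prefactor $2^{-m(\gamma-\alpha)}$ exactly cancels the $2^{-m}$ hidden inside the argument of $\Psi$, leaving a bound in $\rho(z)$ alone. For \eqref{el2}, differentiating under the integral brings down a factor $2\pi i\,\xi_s$, which after the same substitution produces an additional factor $2^{-ma_s}$ and yields
$$\partial_s\widetilde{\rho}_m(z) = 2\pi i\,(2\pi)^{-\alpha}\,2^{-m(\gamma-\alpha+a_s)}\,\Psi_s(A_{2^{-m}}z), \qquad \Psi_s(w) = \int_{\Bbb R^n}\eta_s\rho(\eta)^{-\alpha}\widetilde{\Phi}(\rho(\eta))\,e^{2\pi i\langle w,\eta\rangle}\,d\eta.$$
Applying the analogous Lemma~\ref{L3.4}-type estimate with $\epsilon_s=1$ gives $|\Psi_s(w)|\leq C(1+\rho(w))^{\alpha-\gamma-a_s}$, and the identical telescoping produces $|\partial_s\widetilde{\rho}_m(z)|\leq C(2^m+\rho(z))^{\alpha-\gamma-a_s}\leq C\rho(z)^{\alpha-\gamma-a_s}$.

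The main obstacle is essentially bookkeeping: checking that the dyadic-decomposition and integration-by-parts argument used in the proof of Lemma~\ref{L3.4} applies with the annulus-supported cutoff $\widetilde{\Phi}(\rho(\cdot))$ in place of the ball-supported $\varphi$ from Lemma~\ref{L3.3}. This is a mild point because that proof only uses compact support of the amplitude and smoothness away from the origin; one may rescale $\eta$ so the annulus lies inside $B(0,1)$, or simply repeat the dyadic expansion with only finitely many nonzero terms. Alternatively, since the amplitude is $C_0^\infty$ in all of $\Bbb R^n$, one may just use the Schwartz decay $|\Psi(w)|\leq C_N(1+|w|)^{-N}$ and convert from $|w|$ to $\rho(w)$ via property (E) for $\rho(w)\geq 1$, handling $\rho(w)\leq 1$ by boundedness of $\Psi$. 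Once the key bound on $\Psi$ (and $\Psi_s$) is granted, both conclusions are forced by the homogeneity of $A_t$ and $\rho$.
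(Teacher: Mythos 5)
Your proposal is correct and follows essentially the same route as the paper: the change of variables $\xi=A_{2^{-m}}\eta$ pulls out the factor $2^{m(\alpha-\gamma)}$ (resp. $2^{m(\alpha-\gamma-a_s)}$), and the decay of the resulting $m$-independent oscillatory integral is obtained by the integration-by-parts argument behind Lemma \ref{L3.4} (or, as you note, directly from Schwartz decay of the Fourier transform of the annulus-supported amplitude together with properties (C) and (E)). The only cosmetic difference is that the paper treats the regimes $\rho(z)\leq 2^m$ and $\rho(z)>2^m$ separately, whereas you absorb both into the single bound $2^{m(\alpha-\gamma)}(1+2^{-m}\rho(z))^{\alpha-\gamma}=(2^m+\rho(z))^{\alpha-\gamma}\leq\rho(z)^{\alpha-\gamma}$.
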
 
\begin{proof}  
To prove \eqref{el1} we write 
$$\widetilde{\rho}_{m}(z)=2^{m(\alpha -\gamma)}\int 
(2\pi \rho(\xi))^{-\alpha}\widetilde{\Phi}(\rho(\xi))
e^{2\pi i\langle A_{2^{-m}}z,\xi \rangle}\, d\xi.  $$  
From this we easily have \eqref{el1} when $\rho(z)\leq 2^m$.  Suppose  
$2^m<\rho(z)$. Then we can prove \eqref{el1} in the same way as 
\eqref{e1.8} by applying  integration by parts.    
The estimates in \eqref{el2} can be shown similarly. 
\end{proof} 
\begin{proof}[Proof of Proposition $\ref{P7.1}$]
We may assume that $f\in \mathscr S(\Bbb R^n)$. 
We also assume that 
$\sum_{j=-\infty}^\infty\Phi(2^j\rho(\xi))=1$ for 
$\xi\in \Bbb R^n\setminus \{0\}$. 
Define the operator $\Delta_j$ by 
$$\widehat{\Delta_j(f)}(\xi) = \Phi(2^j\rho(\xi))\hat{f}(\xi) \qquad 
\text{for \quad $j \in \Bbb Z$}.$$  
Let 
\begin{multline*}
T_j(f)(x)
\\ 
=\left(\sum_{k=-\infty}^\infty \int \chi_{[1,2]}
(2^{-k} \rho(y))|I_\alpha(\Delta_{j+k}f)(x+y)- 
I_\alpha(\Delta_{j+k}f)(x)|^2 
\rho(y)^{-\gamma-2\alpha}\, dy\right)^{1/2}.    
\end{multline*} 
Then we have 
\begin{equation}\label{e7.1}  
D_\alpha(f)(x)\leq \sum_{j=-\infty}^\infty T_j(f)(x).  
\end{equation}  
If we put $S_j=\{2^{-j-1}\leq \rho(\xi)\leq 2^{-j+1}\}$,   
 the Plancherel theorem implies  
\begin{multline} \label{e7.2}  
\|T_j(f)\|_2^2  
\\ 
\leq C\sum_{k=-\infty}^\infty \int \chi_{[1,2]}(2^{-k} \rho(y))
\rho(y)^{-\gamma-2\alpha}\left(\int_{S_{j+k}}|\hat{f}(\xi)|^2
\rho(\xi)^{-2\alpha}\left|1-e^{2\pi i\langle y,\xi \rangle}\right|^2
\, d\xi\right) \, dy.  
\end{multline}  
\par 
If $2^k\leq \rho(y)\leq 2^{k+1}$, $\xi\in S_{j+k}$ and $j\geq 0$,  we see that 
\begin{equation*} 
\left|1-e^{2\pi i\langle y,\xi \rangle}\right|\leq 2\pi\sum_{l=1}^n 
|y_l\xi_l|\leq C\sum_{l=1}^n \rho(y)^{a_l}\rho(\xi)^{a_l}
\leq C\sum_{l=1}^n 2^{-ja_l}\leq C2^{-j}.  
\end{equation*}
 Also, $\left|1-e^{2\pi i\langle y,\xi \rangle}\right|\leq 2$. Therefore 
 by \eqref{e7.2} we have 
\begin{multline} \label{e7.3}  
\|T_j(f)\|_2^2  
\\ 
\leq C\sum_{k=-\infty}^\infty C2^{2j\alpha}\min(1, 2^{-2j})
\int_{S_{j+k}}|\hat{f}(\xi)|^2\, d\xi  \leq C2^{2j\alpha}\min(1, 2^{-2j})
\|f\|_2^2,  
\end{multline}  
where the last inequality follows from the bounded overlap of $\{S_j\}$ and 
the Plancherel theorem.  
 \par 
Next, when $2^k\leq \rho(y)\leq 2^{k+1}$,  we decompose 
\begin{multline} \label{e7.4}  
I_\alpha(\Delta_{j+k}f)(x+y)- 
I_\alpha(\Delta_{j+k}f)(x)
\\ 
= \int_{\rho(z)\geq 2^{k+2}} \left(\widetilde{\rho}_{j+k}(z-y)
-\widetilde{\rho}_{j+k}(z)\right)\Delta_{j+k}f(x+z)\, dz 
\\ 
+ \int_{\rho(z)< 2^{k+2}} \left(\widetilde{\rho}_{j+k}(z-y)
-\widetilde{\rho}_{j+k}(z)\right)\Delta_{j+k}f(x+z)\, dz, 
\end{multline}  
where $\widetilde{\rho}_{m}$ is as in Lemma \ref{L7.2}.  
\par 
If $2^k\leq \rho(y)\leq 2^{k+1}$ and $\rho(z)\geq 2^{k+2}$, by \eqref{el2}  
\begin{multline*}  
 \left|\widetilde{\rho}_{j+k}(z-y)
-\widetilde{\rho}_{j+k}(z)\right| 
\leq C\sum_{l=1}^n |y_l|\rho(z)^{-\gamma+\alpha-a_l}  
\leq C\sum_{l=1}^n \rho(y)^{a_l}\rho(z)^{-\gamma+\alpha-a_l}.  
 \end{multline*}  
By this and an elementary computation concerning the maximal operator 
$M$, we see that 
 the first integral on the right hand side of \eqref{e7.4} is majorized by 
\begin{equation*} 
C\sum_{l=1}^n 2^{ka_l}\int_{\rho(z)\geq 2^{k+2}}
\rho(z)^{-\gamma+\alpha-a_l}|\Delta_{j+k}f(x+z)|\, dz 
\leq C2^{k\alpha}M(\Delta_{j+k}f)(x). 
\end{equation*}
Similarly, if $2^k\leq \rho(y)\leq 2^{k+1}$, \eqref{el1}  
implies that the second integral on the right hand side of 
\eqref{e7.4} is bounded by 
\begin{multline*}  
\int_{\rho(z)\leq 2^{k+3}}
\rho(z)^{-\gamma+\alpha}|\Delta_{j+k}f(x+y+z)|\, dz + 
\int_{\rho(z)\leq 2^{k+2}}
\rho(z)^{-\gamma+\alpha}|\Delta_{j+k}f(x+z)|\, dz  
\\ 
\leq C2^{k\alpha}M(\Delta_{j+k}f)(x+y)+
 C2^{k\alpha}M(\Delta_{j+k}f)(x). 
 \end{multline*}  
Using these estimates in \eqref{e7.4}, we see that 
\begin{equation*} 
\left|I_\alpha(\Delta_{j+k}f)(x+y)- I_\alpha(\Delta_{j+k}f)(x)\right|
\leq  C2^{k\alpha}M(\Delta_{j+k}f)(x+y)+ C2^{k\alpha}M(\Delta_{j+k}f)(x) 
\end{equation*}
when $2^k\leq \rho(y)\leq 2^{k+1}$.   
\par 
Thus 
\begin{equation*} 
T_j(f)(x)^2\leq C\sum_{k=-\infty}^\infty \int \chi_{[1,2]}
(2^{-k} \rho(y))|M(\Delta_{j+k}f)(x+y)+M(\Delta_{j+k}f)(x) |^2 
\rho(y)^{-\gamma}\, dy  
\end{equation*}   
and hence, if $w\in A_1$,  
\begin{align*}  
&\int_{\Bbb R^n}T_j(f)(x)^2 w(x)\, dx 
\\ 
&\quad \quad \leq C\sum_{k=-\infty}^\infty \left(\int_{\Bbb R^n}    \notag
|M(\Delta_{j+k})f(x)|^2M(w)(x) \, dx  +  
\int_{\Bbb R^n} |M(\Delta_{j+k}f)(x) |^2w(x)\, dx\right)  
\\ 
&\quad \quad \leq C\sum_{k=-\infty}^\infty  
\int_{\Bbb R^n} |\Delta_{j+k}f(x) |^2w(x)\, dx,            \notag
\end{align*}   
where the last inequality follows from 
the defining property of the $A_1$ 
weights and the $L^2_w$ boundedness of $M$ with $w\in A_1$.   
Thus the Littlewood-Paley inequality in $L^2_w$ implies 
\begin{equation}\label{e7.5} 
\|T_j(f)\|_{2,w}\leq C\|f\|_{2,w}. 
\end{equation}
\par 
Interpolating between \eqref{e7.3} and \eqref{e7.5} with change of measures, 
and noting that for any $w\in A_1$ 
there is $\delta>0$ such that $w^{1+\delta}\in A_1$, we have 
\begin{equation*} 
\|T_j(f)\|_{2,w}\leq C2^{-\epsilon|j|}\|f\|_{2,w}
\end{equation*}   
with  some $\epsilon>0$ for $w\in A_1$, if $0<\alpha<1$.   
This implies the desired inequality in Proposition \ref{P7.1} 
via \eqref{e7.1}.  
\end{proof} 
\par 
Now we can prove part $(1)$ of Theorem $\ref{T1.1}$ for $p>2$.   
Choose a non-negative function $g$ such that $\|g\|_{(p/2)'}=1$ and 
$\|D_\alpha(f)\|_p^2=\int |D_\alpha(f)(x)|^2g(x)\, dx$, where $(p/2)'$ denotes 
the exponent conjugate to $p/2$. For $s>1$, let $M_s(g)=M(g^s)^{1/s}$.  
Then $g\leq M_s(g)$ a.e. and it is known that 
$M_s(g)\in A_1$. Thus by Proposition \ref{P7.1}
 we have 
$$\int |D_\alpha(f)(x)|^2g(x)\, dx\leq \int |D_\alpha(f)(x)|^2 M_s(g)(x)\, dx 
\leq C\int |f(x)|^2 M_s(g)(x)\, dx. $$  
Applying H\"{o}lder's inequality to the last integral with 
 $1<s<(p/2)'$, by $L^{(p/2)'}$ boundedness of $M_s$  we see that 
$$\int |D_\alpha(f)(x)|^2g(x)\, dx\leq C\|f\|_p^2\|M_s(g)\|_{(p/2)'} 
\leq C\|f\|_p^2. $$  
Combining results, we can get the desired estimate. 
\par

\section{Remarks}  

We conclude this note with three remarks.

\begin{remark}\label{re8.1} 
Let $0<\alpha<\gamma$.   
The Fourier transform of $(2\pi\rho(\xi))^{-\alpha}$ is 
a function $\mathscr R_{\alpha}(x)$ which is homogeneous of 
degree $\alpha-\gamma$ with respect to $A_t$ and in 
$C^\infty(\Bbb R^n\setminus\{0\})$
(see \cite{CT2} and \cite[Chapter I]{NS}). Thus we have 
$$I_\alpha(f)(x)=\int_{\Bbb R^n} 
\mathscr R_{\alpha}(x-z)f(z)\, dz, \quad 
f\in \mathscr S(\Bbb R^n).  $$  
\end{remark} 

\begin{remark}\label{re8.2}
Let $0<\alpha<1$, $p_0=2\gamma/(\gamma+2\alpha)$ and $p_0>1$ as in the 
hypotheses of Theorem $\ref{T1.1}$.  Then, if $1\leq p<p_0$, $D_\alpha$ is 
not of weak type $(p,p)$.   Since  $D_\alpha$ is bounded on $L^2(\Bbb R^n)$, 
by taking into account the interpolation of Marcinkiewicz, to show this it 
suffices to prove that $D_\alpha$ is not bounded on $L^p(\Bbb R^n)$ when 
$1<p<p_0$.   
\par 
To see this, 
we prove that if $D_\alpha$ is bounded on $L^p(\Bbb R^n)$ with 
$1\leq p\leq 2$, then $p\geq p_0$. 
Let $A(x)=\{ y\in \Bbb R^n: 1/2\leq \rho(y-x)\leq 1\}$. 
Let $\eta$ be a non-zero element in 
$\mathscr S(\Bbb R^n)$ with $\supp(\hat{\eta}) \subset \{1\leq \rho(\xi)
\leq 2\}$. 
Then 
\begin{align*} 
D_\alpha(\eta)(x)&\geq \left(\int_{A(0)} 
|I_\alpha(\eta)(x+y)-I_\alpha(\eta)(x)|^2\,dy\right)^{1/2}
\\ 
&\geq \left(\int_{A(0)}|I_\alpha(\eta)(x+y)|^2\, dy\right)^{1/2} 
-\left(\int_{A(0)}|I_\alpha(\eta)(x)|^2\, dy\right)^{1/2}
\\ 
&= \left(\int_{A(0)}|I_\alpha(\eta)(x+y)|^2\, dy\right)^{1/2} 
-|A(0)|^{1/2}|I_\alpha(\eta)(x)|.
\end{align*} 
Therefore  
\begin{equation}\label{e8.1}
\left(\int_{A(x)}|I_\alpha(\eta)(y)|^2\, dy\right)^{1/2} 
\leq D_\alpha(\eta)(x)+ C|I_\alpha(\eta)(x)|.  
\end{equation} 
\par 
We have 
\begin{equation}\label{e8.2}  
\left(\int_{\Bbb R^n} |I_\alpha(\eta)(y)|^2\, dy\right)^{p/2} \leq C
\int_{\Bbb R^n}\left(\int_{A(x)}|I_\alpha(\eta)(y)|^2\, dy
\right)^{p/2}\,dx 
\end{equation} 
Let $S(x,r)=\{y\in \Bbb R^n: |x-y|<r\}$ for $x\in \Bbb R^n$ and $r>0$. 
To see \eqref{e8.2}, we consider a covering of $\Bbb R^n$: 
$\cup_{j=1}^\infty A(x^{(j)})=\Bbb R^n$, for all $x^{(j)}\in S(c_j,\tau)$, 
$j=1,2, \dots$, where $S(c_j,\tau)\cap S(c_k, \tau)=\emptyset$ if $j\neq k$. 
Then we see that   
$$ 
\left(\int_{\Bbb R^n} |I_\alpha(\eta)(y)|^2\, dy\right)^{p/2} 
\leq \sum_{j=1}^\infty \left(\int_{A(x^{(j)})}|I_\alpha(\eta)(y)|^2\, dy
\right)^{p/2}  
$$ 
for all  $x^{(j)}\in S(c_j,\tau)$, $j=1,2, \dots$, since $p/2\leq 1$.  
It follows that 
\begin{align*} 
\left(\int_{\Bbb R^n} |I_\alpha(\eta)(y)|^2\, dy\right)^{p/2} 
&\leq \sum_{j=1}^\infty \inf_{x^{(j)}\in S(c_j,\tau)}
\left(\int_{A(x^{(j)})}|I_\alpha(\eta)(y)|^2\, dy\right)^{p/2} 
\\ 
&\leq C_\tau\sum_{j=1}^\infty \int_{S(c_j,\tau)}
\left(\int_{A(x)}|I_\alpha(\eta)(y)|^2\, dy\right)^{p/2}\,dx 
\\ 
&\leq   C_\tau\int_{\Bbb R^n}
\left(\int_{A(x)}|I_\alpha(\eta)(y)|^2\, dy\right)^{p/2}\,dx, 
\end{align*} 
which proves \eqref{e8.2}. 
By \eqref{e8.1} and \eqref{e8.2}, we have  
$$\|I_\alpha(\eta)\|_2\leq C \|D_\alpha(\eta)\|_p + 
C\|I_\alpha(\eta)\|_p.  $$  
Thus if $\|D_\alpha(\eta)\|_p\leq C\|\eta\|_p$, 
we have 
$$\|I_\alpha(\eta)\|_2\leq C \|\eta\|_p + 
C\|I_\alpha(\eta)\|_p.  $$  
Using this with $\eta_t$ in place of $\eta$  
and homogeneity, we readily see that 
$$t^{\alpha-\gamma/2}\leq Ct^{-\gamma+\gamma/p} +Ct^{\alpha+\gamma(1/p-1)}
 \leq Ct^{-\gamma+\gamma/p} $$ 
for all $t \in (0,1)$, which implies that $p\geq 2\gamma/(\gamma+2\alpha)$ 
as claimed. 

\end{remark} 

\begin{remark} 
Define the Littlewood-Paley function  
$$g_Q(f)(x)=\left(\int_0^\infty |Q_t*f(x)|^2\, \frac{dt}{t}\right)^{1/2}, $$
where $Q$ is as in \eqref{ker2}.   Then it is known that 
$$c_1\|f\|_p\leq \|g_Q(f)\|_p\leq c_2\|f\|_p,\quad 1<p<\infty,  $$
with positive constants $c_1$, $c_2$ independent of $f$ (see \cite{R}).  
Also, we can show that 
\begin{equation}\label{e8.3}
g_Q(f)(x)\leq C_\alpha D_\alpha(f)(x), \quad 0<\alpha<1,  
\end{equation} 
for $f \in \mathscr S(\Bbb R^n)$, 
similarly to \cite[p. 162, 6.12]{St2}, which implies the reverse inequality of 
$\|D_\alpha(f)\|_p\leq C\|f\|_p$ in part (1) of Theorem $\ref{T1.1}$.  
\par 
Here we give a proof of \eqref{e8.3} in more details for completeness.   
Let 
$$U_\alpha(x,t)=K_t*I_\alpha(f)(x)=\int \hat{f}(\xi)(2\pi\rho(\xi))^{-\alpha} 
e^{-2\pi t\rho(\xi)} e^{2\pi i\langle x, \xi\rangle}\, d\xi,  $$   
where $K$ is as in \eqref{parapo}.  
Then 
$$\partial_0^2 U_\alpha(x,t)=\int \hat{f}(\xi)(2\pi\rho(\xi))^{-\alpha+2} 
e^{-2\pi t\rho(\xi)} e^{2\pi i\langle x, \xi\rangle}\, d\xi, $$  
where $\partial_0=\partial/\partial t$, and 
\begin{align*} 
\int_0^\infty \partial_0^2 U_\alpha(x,t+s)s^{-\alpha}\, ds 
&= \left(\int_0^\infty 
e^{-s}s^{-\alpha}\, ds\right) \int \hat{f}(\xi)(2\pi\rho(\xi)) 
e^{-2\pi t\rho(\xi)} e^{2\pi i\langle x, \xi\rangle}\, d\xi  
\\ 
&=-\Gamma(1-\alpha) \frac{1}{t}Q_t*f(x).  
\end{align*} 
Using this, we see that 
\begin{align*} 
&\left(\int_0^\infty |Q_t*f(x)|^2\, \frac{dt}{t}\right)^{1/2}
= \Gamma(1-\alpha)^{-1}\left(\int_0^\infty 
t\left|\int_0^\infty \partial_0^2 U_\alpha(x,t+s)s^{-\alpha}\, ds\right|^2  
\, dt \right)^{1/2} 
\\   
&=\Gamma(1-\alpha)^{-1}\left(\int_0^\infty 
\left|\int_0^\infty t^{3/2-\alpha} \chi_{[1,\infty)}\left(s\right)
|s-1|^{-\alpha} \partial_0^2 U_\alpha(x,st)\, ds\right|^2  
\, dt \right)^{1/2}. 
\end{align*}  
By Minkowski's inequality, this is bounded by 
\begin{align*} 
& \Gamma(1-\alpha)^{-1}\int_1^\infty (s-1)^{-\alpha}\left(\int_0^\infty 
t^{2(3/2-\alpha)} 
 \left|\partial_0^2 U_\alpha(x,st)\right|^2  \, dt \right)^{1/2} \, ds 
\\
&= \Gamma(1-\alpha)^{-1}\left(
\int_1^\infty (s-1)^{-\alpha}s^{-2+\alpha}\, ds\right) 
\left(\int_0^\infty 
t^{3-2\alpha} 
 \left|\partial_0^2 U_\alpha(x,t)\right|^2  \, dt \right)^{1/2}.    
\end{align*}
Thus 
\begin{equation}\label{e8.4}
 g_Q(f)(x)\leq C_\alpha \left(\int_0^\infty 
t^{3-2\alpha} 
 \left|\partial_0^2 U_\alpha(x,t)\right|^2  \, dt \right)^{1/2}.     
 \end{equation} 
\par 
Since $\int \partial_0^2K_t =0$, we have 
\begin{equation*} 
\partial_0^2U_\alpha(x,t)= \int \partial_0^2 K_t(y)I_\alpha f(x+y)\, dy  
= \int \partial_0^2 K_t(y)\left(I_\alpha f(x+y)-I_\alpha f(x)\right)\, dy.    
\end{equation*}
Arguing similarly to the proof of Lemma \ref{L3.1}, we see that 
$$|\partial_0^2 K_t(y)|\leq C(t+\rho(y))^{-\gamma-2}.   $$    
Using this, we have 
\begin{align*} 
&|\partial_0^2U_\alpha(x,t)|\leq  C\int (t+\rho(y))^{-\gamma-2} 
\left|I_\alpha f(x+y)-I_\alpha f(x)\right|\, dy 
\\ 
&\leq C\int\limits_{\rho(y)<t}  t^{-\gamma-2}
\left|I_\alpha f(x+z)-I_\alpha f(x)\right|\, dy   
+ C\int\limits_{\rho(y)\geq t} \rho(y)^{-\gamma-2}
\left|I_\alpha f(x+z)-I_\alpha f(x)\right|\, dy.         
\end{align*}@@
It follows that 
$$\int_0^\infty t^{3-2\alpha} 
 \left|\partial_0^2 U_\alpha(x,t)\right|^2  \, dt \leq C(I + II), $$ 
 where 
 $$ I=  \int_0^\infty t^{3-2\alpha} \left(\int_{\rho(y)<t}  t^{-\gamma-2}
\left|I_\alpha f(x+y)-I_\alpha f(x)\right|\, dy\right)^2 \, dt, $$
 $$ 
 II= \int_0^\infty t^{3-2\alpha}\left(\int_{\rho(y)\geq t} \rho(y)^{-\gamma-2}
\left|I_\alpha f(x+y)-I_\alpha f(x)\right|\, dy\right)^2 \, dt. $$
By the Schwarz inequality 
\begin{align*}  
I&\leq C\int_0^\infty t^{3-2\alpha}t^{-2(\gamma+2)}t^\gamma \int_{\rho(y)<t}  
\left|I_\alpha f(x+y)-I_\alpha f(x)\right|^2\, dy \, dt 
\\ 
&=C\int  
\left|I_\alpha f(x+y)-I_\alpha f(x)\right|^2\left(\int_{\rho(y)}^\infty 
t^{-1-\gamma-2\alpha} \, dt \right) dy
\\ 
&=C\frac{1}{\gamma+2\alpha} D_\alpha(f)(x)^2. 
\end{align*}  
Also,   
\begin{align*}  
II&\leq C \int_0^\infty t^{3-2\alpha} t^{-2}
\left(\int_{\rho(y)\geq t} \rho(y)^{-\gamma-2}
\left|I_\alpha f(x+y)-I_\alpha f(x)\right|^2\, dy\right) \, dt 
\\ 
&= C \int \rho(y)^{-\gamma-2}
\left|I_\alpha f(x+y)-I_\alpha f(x)\right|^2
\left(\int_0^{\rho(y)} t^{1-2\alpha} \, dt \right)  \, dy
\\ 
&=C\frac{1}{2-2\alpha} D_\alpha(f)(x)^2.  
\end{align*}
Therefore 
$$\left(\int_0^\infty t^{3-2\alpha} 
 \left|\partial_0^2 U_\alpha(x,t)\right|^2  \, dt\right)^{1/2}  
 \leq C_\alpha D_\alpha(f)(x). $$   
Combining this with \eqref{e8.4}, we have \eqref{e8.3}.

\end{remark}

\end{document}